\def \as {association scheme}
\def \c {\Bbb C}
\def \hrow {\rule{0pt}{18pt}}
\def \hhrow {\rule{0pt}{24pt}}
\def \mc {\mathcal}
\def \mbb {\mathbb}
\def \xr {$(X, \{R_i\}_{i \in [d]})$}
\def \ys {$(Y, \{S_j\}_{j \in [e]})$}
\def \x {\mathcal{X}}
\def \y {\mathcal{Y}}
\def \t {\mathcal{T}}
\def \u {\mathcal{U}}
\def \i {\mathcal{I}}
\def \< {\langle }
\def \> {\rangle }
\newtheorem{thm}{Theorem}[section]
\newtheorem{lemma}[thm]{Lemma}
\newtheorem{prop}[thm]{Proposition}
\newtheorem{cor}[thm]{Corollary}
\title{On Wreath Products of One-Class Association Schemes}
\begin{document}

\author{Sung Y. Song$^{\rm{a}}$ and Bangteng Xu$^{\rm{b}}$\\
\small{$^{\rm{a}}$ Department of Mathematics, Iowa State University,
Ames, IA, 50011, U. S. A.}\\ \small{$\langle$sysong@iastate.edu$\rangle$}\\
\small{$^{\rm{b}}$ Department of Mathematics \& Statistics, Eastern
Kentucky University,}\\ \small{Richmond, KY, 40475, U. S. A.} \\
\small{$\langle$Bangteng.Xu@eku.edu$\rangle$}}

\date{}
\maketitle

\begin{abstract}
We give a full description of the algebraic structures of the
Bose-Mesner algebra and Terwilliger algebra of the wreath product of
one-class association schemes.

\medskip \noindent \textbf{Keywords}: Bose-Mesner algebra;
commutative association scheme; Terwilliger algebra; Wreath product.

\smallskip \noindent \textbf{Classification codes}:  05E30, 05C50,
15A78

\end{abstract}

\section{Introduction}
The wreath product in the theory of association schemes provides a
way to construct new (imprimitive) association schemes from old.
Recently the Terwilliger algebra of the wreath product of one-class
association schemes was described by G. Bhattacharyya, S. Y. Song
and R. Tanaka \cite{BS}. It was shown that all irreducible modules
except for the primary module of the algebra were one-dimensional.
There are not many association schemes which have the property:
``All non-primary irreducible modules of the Terwilliger algebra are
one-dimensional." In fact, it was proved by R. Tanaka \cite{Ta} that
the class of association schemes coming as the wreath product of
one-class association schemes and that of group schemes of finite
abelian groups are the only ones that hold this property.

In this paper we revisit the wreath product of one-class association
schemes to give a complete structural description of its Bose-Mesner
algebra and Terwilliger algebra. Our work is motivated by the work
of F. Levenstein, C. Maldonado and D. Penazzi \cite{LMP06} which
gives the description of the Terwilliger algebra of the Hamming
scheme $H(d, q)$ as symmetric $d$-tensors of the Terwilliger algebra
of the one-class association scheme $H(1, q)$ (or $K_q$). It is also
motivated by P. Terwilliger \cite{Te92} and E. Egge \cite{Eg00} in
the efforts of determination of an abstract version of the
Terwilliger algebra for a given association scheme.

\section{Preliminaries and main results}
In this section, we briefly recall some basic facts about the
Bose-Mesner algebra and Terwilliger algebra of an association
scheme, and the definition of the wreath product of association
schemes (cf. \cite{BI84, So02, Te92, Zi}). Then we introduce our
main findings on the structural properties of Bose-Mesner algebra
and Terwilliger algebra of the wreath product of one-class
association schemes. These findings are formulated as Theorem
\ref{thm-bose}, Theorem \ref{thm-ter}, and Theorem \ref{thm-ter-1}
below.

Let $v$ and $d$ be positive integers. Throughout the paper we will
use $[d]$ to denote the set $\{0, 1, 2,\dots, d\}$ of the first
$d+1$ whole numbers. Let $X$ denote an $v$-element set, and let
$M_X(\mbb{C})$ denote the $\mbb{C}$-algebra of matrices whose rows
and columns are indexed by $X$. Let $R_0,R_1,\ldots,R_d$ be nonempty
relations on $X$, and let $A_0, A_1,\dots, A_d$ be the adjacency
matrices of the relations defined by $(A_i)_{xy}=1$ if $(x,y) \in
R_i$; $0$ otherwise. The pair $\mathcal{X}=(X,\{R_{i}\}_{i\in [d]})$
is called a \textit{$d$-class $(symmetric)$ association scheme of
order $v$} if the following hold:
\begin{enumerate}
\item[(1)] $A_0= I$,
\item[(2)] $A_0 + A_1 + \cdots + A_d= J$,
\item[(3)] $A_i^{t}= A_i$ for all $i \in [d]$,
\item[(4)] $A_iA_j = \sum\limits_{h=0}^{d} p_{ij}^{h}A_h$ for some
nonnegative integers $p_{ij}^h$, for all $h,i,j\in [d]$,
\end{enumerate}
where $I=I_v$ and $J=J_v$ are the $v \times v$ identity matrix and
all-one matrix, respectively, and $A^{t}$ denotes the transpose of
$A$. Our association scheme is also \textit{commutative}: that is,
$A_iA_j=A_jA_i$ for all $i,j\in [d]$, since all $A_i$ are symmetric.

The numbers $p_{ij}^h$ are called the \textit{intersection numbers}
and satisfy
$$p_{ij}^{h} =|\{ z \in X :\ (x,z) \in R_{i}, (z,y) \in R_{j}\}|,$$
where $(x,y)\in R_h.$ Given an element $x \in X$, let $R_i(x)= \{y
\in X:\ (x,y) \in R_i\}$. Then the intersection number
$p^0_{ii}=|R_i(x)|$, and is called the $i$th-valency of $\mc{X}$.
The $i$th-valency is denoted $k_i$. It is convenient to represent a
given association scheme $\mc{X}$ with its adjacency matrices $A_0,
A_1, \dots, A_d$ by the matrix $R(\mc{X}):=\sum\limits_{i=0}^d
iA_i$. The matrix $R(\mc{X})$ is called the association
\textit{relation matrix} of $\mc{X}$. The $(d+1)$-dimensional
algebra $\mathcal{A}=\langle A_0 , A_1 , \dots , A_d \rangle$ is a
semi-simple algebra known as the \textit{Bose-Mesner algebra} of
$\mathcal{X}$. The algebra admits a second basis $E_0 , E_1 , \dots
, E_d$ of primitive idempotents.

Given $X$ and $M_X(\mbb{C})$, by the standard module of $X$, we mean
the $v$-dimensional vector space $V=\mbb{C}^X=\bigoplus\limits_{x
\in X}\mbb{C} \hat{x}$ of column vectors whose coordinates are
indexed by $X$. For each $x \in X$, we denote by $\hat{x}$ the
column vector with $1$ in the $x$th position, and $0$ elsewhere.
Observe that $M_X(\mbb{C})$ acts on $V$ by left multiplication. We
endow $V$ with the Hermitian inner product. For a given association
scheme $\mathcal{X}$, the vector space $V$ can be written as the
direct sum of $V_i=E_iV$ where $V_i$ are the maximal common
eigenspaces of $A_0 , A_1 , \dots , A_d$. Given an element $x \in
X$, let $V_{i}^{*}=V_{i}^{*}(x) =\bigoplus\limits_{y \in
R_i(x)}\mbb{C} \hat{y}$. Both $R_i(x)$ and $V_i^*$ are referred to
as the $i$th \textit{subconstituent} of $\mathcal{X}$ with respect
to $x$. Let $E_i^{*}=E_i^{*}(x)$ be the orthogonal projection map
from $V=\bigoplus\limits_{i=0}^{d}V_{i}^{*}$ to the $i$th
subconstituent $V_{i}^{*}$. So, $E_i^{*}$ can be represented by the
diagonal matrix given by
$$(E_i^{*})_{yy}=\left
\{\begin{array}{lc}1 &\mbox{if } (x,y)\in R_i\\ 0 & \mbox{if } (x,y)
\notin  R_i \end{array}\right ..$$ The matrices
$E_0^{*},E_1^{*},\dots,E_d^{*}$ form a basis for a subalgebra
$\mathcal{A}^{*}=\mathcal{A}^{*}(x) = \langle E_i^{*} \rangle_{i\in
[d]}$ of $M_X(\mbb{C})$. The algebra $\mathcal{A}^{*}$ is
a commutative, semi-simple subalgebra of $M_X(\mbb{C})$. This
algebra is called the dual Bose-Mesner algebra of $\mathcal{X}$ with
respect to $x$. Let $\mathcal{T}=\mathcal{T}(x)$ denote the
subalgebra of $M_X(\mbb{C})$ generated by the Bose-Mesner algebra
$\mc{A}$ and the dual Bose-Mesner algebra $\mc{A}^{*}$. We call
$\mathcal{T}$ the {\it Terwilliger algebra of $\mc{X}$ with respect
to $x$}.

Terwilliger observed the following relations between triple products
$E_{i}^{*}A_jE_{h}^{*}$ and the intersection numbers of the scheme.

\begin{prop}\cite[Lemma 3.2]{Te92} \label{tripro} For $h, i, j \in [d]$,
$E_{i}^{*}A_jE_{h}^{*}=0$ if and only if $p_{ij}^{h}= 0$.\end{prop}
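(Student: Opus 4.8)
The plan is to compute $E_i^* A_j E_h^*$ entrywise and read off when it vanishes. Since $E_i^*(x)$ and $E_h^*(x)$ are diagonal $0$--$1$ matrices, for any $y,z \in X$ we have
$(E_i^* A_j E_h^*)_{yz} = (E_i^*)_{yy}\,(A_j)_{yz}\,(E_h^*)_{zz}$,
which equals $1$ exactly when $(x,y)\in R_i$, $(y,z)\in R_j$, and $(x,z)\in R_h$ all hold, and equals $0$ otherwise. In particular $E_i^* A_j E_h^*$ is again a $0$--$1$ matrix, so it is the zero matrix if and only if there is no pair $(y,z)\in X\times X$ with $(x,y)\in R_i$, $(y,z)\in R_j$, and $(x,z)\in R_h$.

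Next I would link this to the intersection number $p_{ij}^h$. Since $R_h\neq\emptyset$ and the scheme is symmetric, its valency $k_h = p_{hh}^0 = |R_h(x)|$ is positive, so we may fix $z_0\in X$ with $(x,z_0)\in R_h$; by definition $p_{ij}^h = |\{y\in X:\ (x,y)\in R_i,\ (y,z_0)\in R_j\}|$, a quantity independent of which $z_0\in R_h(x)$ is chosen. If $p_{ij}^h=0$, then for every $z$ with $(x,z)\in R_h$ no admissible $y$ exists, while for every $z$ with $(x,z)\notin R_h$ we have $(E_h^*)_{zz}=0$; hence every entry of $E_i^* A_j E_h^*$ is $0$. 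Conversely, if $p_{ij}^h\neq 0$, pick $z_0\in R_h(x)$ and a corresponding $y_0$ with $(x,y_0)\in R_i$ and $(y_0,z_0)\in R_j$; then $(E_i^* A_j E_h^*)_{y_0 z_0}=1$, so the product is nonzero.

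No step here is genuinely difficult; the only things needing care are the index bookkeeping --- the summed relation $R_j$ occupies the middle slot of the triple product, matching the subscripts of $p_{ij}^h$, while the outer projections $E_i^*$ and $E_h^*$ supply the remaining subscript and the superscript --- and the remark that $R_h(x)$ is nonempty, which is exactly where the standing assumption that every $R_i$ is nonempty (equivalently, every valency is positive) gets used. One could instead argue via the combinatorial interpretation of $E_i^* A_j E_h^*$ as counting walks $x\to y\to z$ of prescribed relation types, but the entrywise computation is the most direct route.
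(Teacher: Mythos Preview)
Your proof is correct: the entrywise computation cleanly identifies the $(y,z)$-entry of $E_i^*A_jE_h^*$ as the indicator of the event $(x,y)\in R_i$, $(y,z)\in R_j$, $(x,z)\in R_h$, and then the equivalence with $p_{ij}^h\neq 0$ follows from the definition of the intersection numbers together with the nonemptiness of $R_h(x)$. The paper itself does not supply a proof of this proposition---it is quoted from Terwilliger~\cite[Lemma~3.2]{Te92} as a known fact---so there is nothing to compare; your argument is exactly the standard one.
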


The algebra $\mathcal{T}$ is generated by the set $\{E_i^*A_jE_h^*:\
i,j,h \in [d]\}$ of `triple products' as an algebra, but in general,
the set $\{E_i^*A_jE_h^*:\  i,j,h \in [d]\}$ spans a proper linear
subspace of $\mathcal{T}$. However, in \cite{Mu93}, A. Munemasa
described the combinatorial characteristics of the association
schemes for which the set of triple products spans the entire space
$\mathcal{T}$ as in the following.

\begin{prop} \cite{Mu93} The set $\{E_i^*A_jE_h^*:\  i,j,h \in [d]\}$
spans $\mc{T}$ for each $x\in X$ if and only if $\mathcal{X}$ is
{\it triply-regular}, that is, the size $p_{ijh}^{lmn}(y,z,w)$ of
the set $R_i(y)\cap R_j(z)\cap R_h(w)$ is a constant $p_{ijh}^{lmn}$
for all triples $y,z,w\in X$ with $(y,z) \in R_l$, $(y,w) \in R_m$
and $(z,w) \in R_n$.\end{prop}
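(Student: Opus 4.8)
\medskip
\noindent\textbf{Proof sketch.}
The plan is to compare, for a fixed base point $x$, the two natural spanning sets of $\mc{T}=\mc{T}(x)$ entrywise. First I would record two reductions. Since $\mc{A}=\langle A_j\rangle$ and $\mc{A}^{*}=\langle E_i^{*}\rangle$ are subalgebras with $A_jA_{j'}\in\mc{A}$ and $E_i^{*}E_{i'}^{*}=\delta_{ii'}E_i^{*}$, the algebra $\mc{T}=\langle\mc{A},\mc{A}^{*}\rangle$ is spanned by the \emph{alternating words} $W=E_{i_0}^{*}A_{j_1}E_{i_1}^{*}A_{j_2}\cdots A_{j_m}E_{i_m}^{*}$ ($m\ge0$), the identity $I=\sum_iE_i^{*}$ being used to force each word to begin and end with an $E^{*}$. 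On the other hand, each pair $(y,z)\in X\times X$ has a unique \emph{triangle type} $(a,b,c)$ relative to $x$, meaning $(x,y)\in R_a$, $(y,z)\in R_b$, $(x,z)\in R_c$, and a computation shows $E_a^{*}A_bE_c^{*}$ is exactly the $0/1$ indicator matrix of the pairs of type $(a,b,c)$; hence the span of the triple products is precisely the space of matrices whose $(y,z)$-entry depends only on the triangle type of $(y,z)$. Thus the theorem reduces to the statement: for every $x$, every alternating word relative to $x$ has entries constant on triangle types, if and only if $\mc{X}$ is triply-regular.

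\smallskip
\noindent For the ``if'' direction I would fix $x$, assume $\mc{X}$ triply-regular, and induct on the word length $m$. The cases $m\le1$ are immediate, since $E_{i_0}^{*}=E_{i_0}^{*}A_0E_{i_0}^{*}$ and $E_{i_0}^{*}A_{j_1}E_{i_1}^{*}$ is already a triple product. For $m\ge2$, I would factor $W=(E_{i_0}^{*}A_{j_1}E_{i_1}^{*})(E_{i_1}^{*}A_{j_2}E_{i_2}^{*}\cdots A_{j_m}E_{i_m}^{*})$ and expand $(W)_{yz}$ as a matrix product. Assuming $(x,y)\in R_{i_0}$ and $(x,z)\in R_{i_m}$ (otherwise the entry is $0$), the first factor restricts the summation index to $w\in R_{i_1}(x)\cap R_{j_1}(y)$, while by the inductive hypothesis the second factor, evaluated at $(w,z)$, equals a scalar $f(b)$ depending only on the relation $R_b$ containing $(w,z)$. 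Grouping the sum by $b$ gives
$$(W)_{yz}=\sum_{b}f(b)\,\bigl|R_{i_1}(x)\cap R_{j_1}(y)\cap R_b(z)\bigr|,$$
and triple-regularity makes each intersection number on the right depend only on the relations containing $(x,y)$, $(x,z)$, $(y,z)$ --- i.e.\ only on the triangle type of $(y,z)$. That is exactly what the induction needs.

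\smallskip
\noindent For the ``only if'' direction I would apply the hypothesis, for each $x$, to the single length-two word $W=E_m^{*}A_iE_h^{*}A_jE_n^{*}$. A short computation gives $(W)_{yz}=|R_i(y)\cap R_h(x)\cap R_j(z)|$ whenever $(x,y)\in R_m$ and $(x,z)\in R_n$. Since $W$ lies in the span of the triple products relative to $x$, this entry depends only on the triangle type of $(y,z)$, which says precisely that $|R_i(y)\cap R_j(z)\cap R_h(w)|$ is constant over all $y,z$ satisfying $(y,w)\in R_m$, $(z,w)\in R_n$, $(y,z)\in R_l$, once we put $w=x$. Letting $x$ range over $X$, and re-rooting a given triple $(y,z,w)$ at each of its three vertices (each rooting being governed by the analogous length-two word based at that vertex), one checks that the common value is independent of the chosen base point, so $p_{ijh}^{lmn}(y,z,w)$ is a genuine constant and $\mc{X}$ is triply-regular.

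\smallskip
\noindent The hard part will be the inductive step of the ``if'' direction: one must hit on the right induction (on word length) and see that the inductive hypothesis supplies exactly the structure needed --- the tail's value depending only on the last relation --- so that the walk-count collapses to a single $3$-point intersection number, to which triple-regularity then applies, with the summation vertex playing the role of the counted point. A secondary technical point is the base-point independence in the ``only if'' direction, which needs the short re-rooting argument indicated above.
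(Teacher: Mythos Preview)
The paper does not prove this proposition; it is quoted from Munemasa's unpublished preprint as background, with no argument supplied. So there is nothing in the paper to compare your sketch against, and your attempt has to be judged on its own.

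Your ``if'' direction is correct. The key reformulation---that the linear span of the $E_a^{*}A_bE_c^{*}$ is exactly the space of matrices whose $(y,z)$-entry depends only on the triangle type of $(y,z)$ relative to $x$---is the right one, and the induction on alternating-word length goes through cleanly: at the inductive step the tail contributes a scalar depending only on the relation $R_b\ni(w,z)$, and triple regularity then makes each summand $|R_{i_1}(x)\cap R_{j_1}(y)\cap R_b(z)|$ depend only on the pairwise relations among $x,y,z$, as needed.

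The ``only if'' direction has a gap. Your length-two computation correctly shows that for each fixed base point $x=w$ the quantity $|R_i(y)\cap R_j(z)\cap R_h(w)|$ depends only on the pairwise relations among $y,z,w$; call this value $c_w$. What remains is to show that $c_w$ does not depend on $w$. Your re-rooting sentence does not accomplish this: rooting a \emph{single} triple $(y,z,w)$ at each of its three vertices only confirms, trivially, that the three computations agree on that one triple; it says nothing about two triples $(y_1,z_1,w_1)$ and $(y_2,z_2,w_2)$ with no vertex in common. The three rootings together do tell you that the count, viewed as a function on triples of the given pattern $(l,m,n)$, depends only on $y$, only on $z$, and only on $w$ separately, but passing from that to global constancy still requires a connectivity argument on the set of such triples (two triples being linked when they share a vertex in the same role). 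That step is exactly what your phrase ``one checks'' is hiding, and it is the only substantive point left in that direction.
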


The wreath product of one-class association schemes that we study in
this paper is triply-regular (cf. \cite{Bh, BS}); and so, the set of
triple products spans its Terwilliger algebra with respect to each
element of the wreath product.

\medskip

We now recall the notion of the wreath product of two association
schemes. Let $\mathcal{X}=(X, \{R_i\}_{i\in[d]})$ and
$\mathcal{Y}=(Y, \{S_j\}_{j\in[e]})$ be association schemes of order
$|X|= v$ and $|Y|= u$. The \textit{wreath product} $\mathcal{X} \wr
\mathcal{Y}= (X \times Y, \{W_l\}_{l\in[d+e]})$ of $\mathcal{X}$ and
$\mathcal{Y}$ is a $(d+e)$-class association scheme, defined by:
\begin{itemize}
\item[] $W_0 = \{ ((x,y),(x,y)) :\ (x,y)\in X \times Y\}$;
\item[] $W_l = \{ ((x_1,y),(x_2,y)) :\ (x_1,x_2)\in R_l , y
    \in Y\}$ for $1 \leq l \leq d$ ; and
\item[] $W_l = \{ ((x_1,y_1),(x_2,y_2)) :\ x_1,x_2 \in X,
    (y_1,y_2) \in S_{l-d}\}$ for $d+1 \leq l \leq d+e$.
\end{itemize}
The association relation matrix  of $\mc{X}\wr\mc{Y}$ is described,
in terms of $R(\mc{X})$ and $R(\mc{Y})$, by
 $$ R(\mathcal{X} \wr \mathcal{Y})=\sum\limits_{l=0}^{d+e}lW_l= I_u \otimes R(\mathcal{X}) + \{R(\mathcal{Y})+
 d(J_u-I_u)\}\otimes J_v.$$
Let $A_0,A_1,\dots, A_d$ and $B_0,B_1,\dots, B_e$ be the adjacency
matrices of $\mathcal{X}$ and those of $\mathcal{Y}$, respectively.
Then the adjacency matrices $C_i$ of $\mathcal{X} \wr \mathcal{Y}$
are given by
$$C_0=B_0 \otimes A_0,\ C_1=B_0\otimes A_1,\dots,\ C_{d}=B_0\otimes
A_d,\ C_{d+1}=B_1\otimes J_v, \dots,\ C_{d+e}=B_e\otimes J_v,$$
where $\otimes$ denotes the Kronecker product: $A\otimes B =
(a_{ij}B)$ of two matrices $A=(a_{ij})$ and $B$. Here and in what
follows, we refer to the tensor product $A\otimes B$ of $A \in
M_X(\mbb{C})$ and $B\in M_Y(\mbb{C})$ as the Kronecker
product of $A$ and $B$ in $M_{X\times Y}(\mbb{C})$. 

In order to investigate the algebraic structure of the Bose-Mesner
algebra and Terwilliger algebra of the wreath product of one-class
association schemes, we shall need the notion of the \textit{complex
product} and \textit{subschemes} by following \cite{Zi}. Let $\x =$
\xr\ be an \as, and let $A_0, A_1, ..., A_d$ be the adjacency
matrices. For any relations $R_i$ and $R_j$, define
$$
R_i R_j := \{ R_h :\ p_{ij}^h \ne 0 \}.
$$
Let $\Delta$ be a nonempty subset of $[d]$. Then $\{R_i\}_{i \in
\Delta}$ is called a {\it closed subset} if $R_h R_j \subseteq
\{R_i\}_{i \in \Delta}$ for any $h, j \in \Delta$. If $\{R_i\}_{i\in
\Delta}$ is a closed subset, then the $\c$-space with basis
$\{A_i\}_{i \in \Delta}$ is a subalgebra of the Bose-Mesner algebra
of $\x$, called a {\it Bose-Mesner subalgebra} of $\x$, and denoted
by $\< A_i \> _{i \in \Delta}$. In this case, for each $x\in X$, let
$R_{\Delta}(x):=\bigcup_{i\in\Delta}R_i(x)$. With the given closed
subset $\{R_i\}_{i\in\Delta}$ and $X_\Delta=R_{\Delta}(x)$ for an
arbitrarily fixed $x\in X$, the pair $(X_\Delta,
\{\overline{R_i}\}_{i\in \Delta})$ where
$\overline{R_i}:=R_i\cap (X_\Delta\times X_\Delta)$ forms an
association scheme \cite{Zi}. This scheme is called the
\textit{subscheme} of $\mc{X}$ with respect to $x$ and closed subset
$\{R_i\}_{i\in\Delta}$, and is denoted by $\mc{X}_\Delta$.
Note that the cardinality of the set $X_\Delta$ is
$\sum_{i \in \Delta} k_i$, and $\sum_{i \in \Delta} k_i$ divides
$\sum_{i \in [d]} k_i$. Furthermore, it is
shown that the Bose-Mesner algebra of $\mc{X}_\Delta$ is `exactly
isomorphic' to $\< A_i \> _{i \in \Delta}$ in the following sense.

Let $\x =$ \xr\ be an \as, and let $A_0, A_1, ..., A_d$ be the
adjacency matrices of $\mc{X}$. Let $\y = $ \ys\ be an \as, and let
$B_0, B_1, \dots, B_e$ be the adjacency matrices of $\mc{Y}$. Let
$\{R_i\}_{i \in \Delta}$ be a closed subset of $\x$, and $\{S_j\}_{j
\in \Lambda}$ a closed subset of $\y$. We say that the Bose-Mesner
subalgebras $\< A_i \> _{i \in \Delta}$ and $\< B_j \> _{j \in
\Lambda}$ are {\it exactly isomorphic} if there is a bijection $\pi:
\Delta \to \Lambda$ such that the linear map from $ \< A_i \> _{i
\in \Delta}$ to $\< B_j \> _{j \in \Lambda}$ induced by $A_i \mapsto
B_{\pi(i)}$ is an algebra isomorphism.
\medskip

We now formulate our main findings. Let $d$ be a positive integer,
and $n_1, n_2, \dots, n_d$ positive integers greater than or equal
to $2$. Let $K_n$ denote the one-class association scheme of order
$n$. Let $\x=(X,\{R_i\}_{i\in[d]}) = K_{n_1} \wr K_{n_2} \wr \cdots
\wr~K_{n_d}$. Let  $\{A_0, A_1, \dots, A_d\}$ be the basis of
Bose-Mesner algebra of $\x$, and $\{E_0^*, E_1^*, \dots, E_d^*\}$
the basis of the dual Bose-Mesner algebra of $\x$ with respect to a
fixed $x\in X$. Let $v=|X| = \prod\limits_{i=1}^d n_i$, and $J =
J_v$. The following theorem characterizes the Bose-Mesner algebra of
the wreath product of one-class association schemes.

\begin{thm}
\label{thm-bose}  The association scheme $\x=(X,\{R_i\}_{i\in[d]}) =
K_{n_1} \wr K_{n_2} \wr \cdots \wr~K_{n_d}$ has the following
properties.
\begin{enumerate} \item[$(\rm{i})$] The valencies of $\x$ are  \[k_1 = n_1 - 1,\]
\[k_i =(k_0+k_1+\cdots +k_{i-1})(n_i-1)= n_1 \cdots n_{i-1} (n_i -
1),\ \mbox{for } i =2, 3, ..., d.\]
\item[$(\rm{ii})$]
\[A_i A_j = k_i A_j, \quad 0 \le i < j \le d.\]
\item[$(\rm{iii})$] \[(A_i)^2 = k_i \left ( A_0 + A_1 + \cdots + A_{i-1} +
   \frac{n_i - 2}{n_i -1 } A_i\right ), \quad 1 \le i \le d.\]
\item[$(\rm{iv})$] For any $1 \le i \le d$, $\{R_0, R_1, ..., R_i\}$ is a
closed subset; and so, $\langle A_j\rangle_{j\in [i]}$ forms a
Bose-Mesner subalgebra of $\mc{A}$.\end{enumerate}
\end{thm}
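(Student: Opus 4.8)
The plan is to prove all four parts by induction on $d$. The base case $d=1$ is a direct computation for $K_{n_1}$: here $A_0=I$ and $A_1=J-I$, so $k_1=n_1-1$; part (ii) amounts only to $A_0A_1=A_1=k_0A_1$; the identity $(J-I)^2=(n_1-2)J+I=(n_1-1)A_0+(n_1-2)A_1=k_1\big(A_0+\tfrac{n_1-2}{n_1-1}A_1\big)$ gives (iii); and (iv) is immediate. For $d\ge 2$, group the iterated product as $\x=\x'\wr K_{n_d}$ with $\x'=K_{n_1}\wr\cdots\wr K_{n_{d-1}}$ a $(d-1)$-class scheme (the wreath product being associative), and write $A_0',\dots,A_{d-1}'$ for the adjacency matrices of $\x'$, $k_i'$ for its valencies, and $v'=|X'|=n_1\cdots n_{d-1}$. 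Since $K_{n_d}$ has adjacency matrices $I_{n_d}$ and $J_{n_d}-I_{n_d}$, the formulas for the adjacency matrices of a wreath product recalled above give
\[
A_i=I_{n_d}\otimes A_i'\ \ (0\le i\le d-1),\qquad A_d=(J_{n_d}-I_{n_d})\otimes J_{v'}.
\]

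Assume the theorem holds for $\x'$. Part (i) is then immediate: a Kronecker product of matrices with constant row sums has constant row sum equal to the product of the row sums, so $k_i=k_i'$ for $i\le d-1$ (which matches the asserted formula, as that involves only $n_1,\dots,n_{d-1}$), while $k_d=(n_d-1)v'$ and $v'=k_0'+\cdots+k_{d-1}'=k_0+\cdots+k_{d-1}$ give the claimed form. Parts (ii) and (iii) then run on the multiplicativity $(P\otimes Q)(P'\otimes Q')=PP'\otimes QQ'$ together with the scalar identities $J_n^2=nJ_n$, $(J_n-I_n)J_n=(n-1)J_n$, $(J_n-I_n)^2=(n-2)J_n+I_n$, and the fact that $A_i'J_{v'}=k_i'J_{v'}$ (constant row sums). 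When all indices involved are $\le d-1$, the required identity is the inductive hypothesis for $\x'$ tensored with $I_{n_d}$; when the index $d$ occurs it is a short direct calculation, e.g. $A_iA_d=(I_{n_d}\otimes A_i')\big((J_{n_d}-I_{n_d})\otimes J_{v'}\big)=k_i'\,(J_{n_d}-I_{n_d})\otimes J_{v'}=k_dA_d$ for $i<d$, and $A_d^2=\big((n_d-2)J_{n_d}+I_{n_d}\big)\otimes v'J_{v'}$.

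The one step that takes a moment is recasting $A_d^2$ into the normalized shape of (iii). For this one uses two partial-sum identities that come straight from axiom (2), applied to $\x'$ and then to $\x$: $A_0+\cdots+A_{d-1}=I_{n_d}\otimes(A_0'+\cdots+A_{d-1}')=I_{n_d}\otimes J_{v'}$, and $J_{n_d}\otimes J_{v'}=J_v=(A_0+\cdots+A_{d-1})+A_d$. Substituting the first into $A_d^2$ and using the second to eliminate $J_{n_d}\otimes J_{v'}$ collects everything into $k_d\big(A_0+\cdots+A_{d-1}+\tfrac{n_d-2}{n_d-1}A_d\big)$, which is (iii) for $i=d$. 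Finally, (iv) is a formal consequence of (ii) and (iii): for any $a,b\in[i]$ the product $A_aA_b$ lies in the span of $A_0,\dots,A_{\max(a,b)}$, hence every intersection number $p_{ab}^h$ with $a,b\le i$ and $p_{ab}^h\ne 0$ has $h\le i$; by the definitions recalled above this is precisely the statement that $\{R_0,\dots,R_i\}$ is a closed subset, and then $\langle A_j\rangle_{j\in[i]}$ is a Bose-Mesner subalgebra.

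I expect the only real work to be bookkeeping --- carrying the Kronecker factorization of the $A_i$ cleanly through the induction, and recognizing the two partial-sum identities that normalize $A_d^2$ --- rather than anything conceptual: every matrix computation above reduces to the three scalar relations for $J_n$ and $J_n-I_n$ stated above.
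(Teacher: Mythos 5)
Your proposal is correct and follows essentially the same route as the paper: the paper proves a lemma about $\y\wr K_n$ via the Kronecker-product form of the adjacency matrices ($I_n\otimes B_j$ and $\bar J_n\otimes J_u$) and then deduces the theorem by induction on $d$, which is exactly your decomposition $\x=\x'\wr K_{n_d}$ and the same three scalar identities for $J_n$ and $J_n-I_n$. The only blemish is a typo in your sample computation, where the chain $A_iA_d=k_i'\,(J_{n_d}-I_{n_d})\otimes J_{v'}$ should conclude $=k_iA_d$ rather than $=k_dA_d$; the preceding step is right, so nothing is actually at stake.
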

Parts (i) and (iv) of Theorem \ref{thm-bose} were first observed by
G. Bhattacharyya in her Ph.D. dissertation \cite{Bh}. We derive the
following structure theorems for the Terwilliger algebra $\t(x)$ of
$\x$.

\begin{thm}
\label{thm-ter} Let $\t(x)$ be the Terwilliger algebra of $\x =
K_{n_1} \wr K_{n_2} \wr \cdots \wr~K_{n_d}$, where $n_i \ge 2$ for
all $1 \le i \le d$. For any $i, j \in [d]$, let
$$
G_{ij} := \left \{
\begin{array}{ll}
k_j^{-1} E_i^* A_j E_j^*, & \hbox{if } i < j;  \\
k_j^{-1} E_i^* A_i E_j^*, & \hbox{if } i > j; \hrow \\
k_i^{-1} E_i^* J E_i^*, & \hbox{if } i = j. \hrow
\end{array}
\right.
$$
Let $\u$ be the $\c$-space spanned by the set
$\{ G_{ij} \}_{i,j\in [d]}$. Then the following hold.
\begin{enumerate}
\item[$(\rm{i})$] $\u$ is an algebra and isomorphic to
 $M_{d+1}(\c)$.
\item[$(\rm{ii})$] $\u$ is an ideal of $\t(x)$ and the quotient algebra
$\t(x)/\u$ is commutative.
\end{enumerate}
\end{thm}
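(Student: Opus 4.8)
The plan is to reduce the whole statement to two elementary identities: the \emph{collapse formula} $G_{ij}=k_j^{-1}E_i^*JE_j^*$ for all $i,j\in[d]$, and the \emph{absorption relation} $JE_j^*J=k_jJ$ (immediate, since $(JE_j^*J)_{pq}=\sum_r(E_j^*)_{rr}=|R_j(x)|=k_j$). The collapse formula is trivial when $i=j$. For $i<j$ I would read the $(p,q)$-entry of $A_iA_j=k_iA_j$ (Theorem~\ref{thm-bose}(ii)): it says $|R_i(p)\cap R_j(q)|=k_i$ when $(p,q)\in R_j$ and $=0$ otherwise. Taking $p\in R_i(x)$ and $q\in R_j(x)$, the base point $x$ lies in $R_i(p)\cap R_j(q)$, forcing $(p,q)\in R_j$; hence every pair from the $i$-th subconstituent to the $j$-th is an $R_j$-pair, so $E_i^*A_jE_j^*=E_i^*JE_j^*$. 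Transposing gives $E_i^*A_iE_j^*=E_i^*JE_j^*$ for $i>j$, which completes the collapse formula. The same entrywise reading of $A_iA_l=k_iA_l$ (for $i<l$) and of the expression for $A_i^2$ in Theorem~\ref{thm-bose}(iii) supplies the companion data I will need: $E_i^*A_mE_i^*=0$ for $m>i$, and $E_i^*A_mE_i^*\cdot J=c_{m,i}\,E_i^*J$ for $0\le m\le i$, where $c_{m,i}=k_m$ if $m<i$ and $c_{i,i}=k_i(n_i-2)/(n_i-1)$. Thus the first step is to compile a complete table of the triple products $E_h^*A_mE_i^*$: each is $0$, a scalar multiple of $E_h^*JE_i^*$ (when $h\ne i$), or a diagonal ``sub-block'' of $E_i^*JE_i^*$ with the constant row sums above (when $h=i$).

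Granting that table, part (i) is a one-line computation. Using $E_h^*E_k^*=\delta_{hk}E_h^*$, the collapse formula, and the absorption relation, $G_{ij}G_{kl}=k_j^{-1}k_l^{-1}E_i^*JE_j^*E_k^*JE_l^*=\delta_{jk}\,k_j^{-1}k_l^{-1}E_i^*(JE_j^*J)E_l^*=\delta_{jk}\,k_l^{-1}E_i^*JE_l^*=\delta_{jk}G_{il}$, so the $G_{ij}$ satisfy the matrix-unit relations. They are $(d+1)^2$ matrices whose supports $R_i(x)\times R_j(x)$ are nonempty and pairwise disjoint (the $R_i(x)$ partition $X$), hence linearly independent; therefore $\u$ is a subalgebra of $M_X(\c)$ of dimension $(d+1)^2$ and the map sending $G_{ij}$ to the $(i,j)$ matrix unit is an algebra isomorphism $\u\to M_{d+1}(\c)$.

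For part (ii), since $\t$ is generated as an algebra by $A_0,\dots,A_d$ and $E_0^*,\dots,E_d^*$, it is enough to show $E_h^*\u\subseteq\u$ and $A_m\u\subseteq\u$ to conclude $\u$ is a left ideal. The first is immediate: $E_h^*G_{ij}=\delta_{hi}G_{ij}$. For the second I would expand $A_mE_i^*=\sum_h E_h^*A_mE_i^*$ and apply the table together with $E_h^*JE_i^*JE_j^*=k_iE_h^*JE_j^*$: when $h\ne i$, $E_h^*A_mE_i^*JE_j^*$ is $0$ or a multiple of $G_{hj}$; when $h=i$, it equals $c_{m,i}E_i^*JE_j^*$, again in $\u$. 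Hence $A_mG_{ij}=k_j^{-1}A_mE_i^*JE_j^*\in\u$. Since both $\t$ and $\u$ are closed under transpose and $G_{ij}^t=(k_i/k_j)G_{ji}$, a transpose-closed left ideal is two-sided, so $\u$ is an ideal of $\t$. Finally $\t/\u$ is generated by the images of the $E_h^*$ and the $A_m$; the $E_h^*$ commute with one another, the $A_m$ commute with one another, and $[E_h^*,A_m]=\sum_{i}(E_h^*A_mE_i^*-E_i^*A_mE_h^*)=\sum_{i\ne h}(E_h^*A_mE_i^*-E_i^*A_mE_h^*)\in\u$, the $i=h$ terms cancelling and every off-diagonal triple product lying in $\u$ by the table. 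Hence every pair of generators of $\t/\u$ commutes, so $\t/\u$ is commutative.

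The only real obstacle is the first step: correctly determining which triple products $E_h^*A_mE_i^*$ vanish and the exact values of the rest. This is where the specific multiplication rules of Theorem~\ref{thm-bose}(ii)--(iii) (and, behind them, the chain of closed subsets $\{R_0,\dots,R_i\}$) are genuinely used. Once that table is established, parts (i) and (ii) are purely formal: matrix-unit arithmetic for (i), and bookkeeping with $J$, the idempotents $E_h^*$, and the relation $JE_j^*J=k_jJ$ for (ii).
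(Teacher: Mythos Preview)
Your argument is correct, and the overall strategy is close to the paper's, but you streamline it in three places worth noting. First, where the paper fixes an ordering of $X$, writes each $A_j$ as an explicit block matrix (Lemmas~\ref{lem-aj} and~\ref{lem-eij}), and verifies $G_{ij}G_{gh}=\delta_{jg}G_{ih}$ by multiplying blocks, you collapse everything to the single identity $G_{ij}=k_j^{-1}E_i^*JE_j^*$ together with $JE_j^*J=k_jJ$; this gives the matrix-unit relations in one line and is coordinate-free. Second, the paper checks two-sidedness of $\u$ by computing both $A_hG_{ij}$ and $G_{ij}A_h$ separately (Lemma~\ref{lem-aheij}), whereas you prove only the left-ideal condition and then invoke closure under transpose, which is shorter. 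Third, and most substantively, for the commutativity of $\t(x)/\u$ the paper appeals to the triple-regularity of $\x$ (imported from \cite{Bh,BS}) to conclude that $\t(x)/\u$ is spanned by the classes of the diagonal triple products $E_i^*A_hE_i^*$, and then uses Lemma~\ref{lem-eiahei} to show those commute. Your route---showing $[E_h^*,A_m]\in\u$ because every off-diagonal triple product $E_h^*A_mE_i^*$ with $h\ne i$ is either $0$ or a scalar multiple of $E_h^*JE_i^*\in\u$---is more elementary and self-contained, since it needs only the generators of $\t(x)$ and no external triple-regularity result. The paper's approach, on the other hand, yields along the way the explicit absorption formulas of Lemma~\ref{lem-aheij}, which it reuses in the proof of Theorem~\ref{thm-ter-1}.
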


\begin{cor}\label{cor-thm-ter}\cite[Corollary 4.3]{BS} If $b$ denotes the number
$|\{i\in\{1, 2,\dots, d\} :\ n_i = 2 \}|$, then
$$
\t(x) \cong M_{d+1}(\c) \oplus M_1(\c)^{\oplus \frac{d(d+1)}{2}
-b}.$$\end{cor}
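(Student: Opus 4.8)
The plan is to derive the corollary from Theorem~\ref{thm-ter} by combining the ideal structure of $\u$ inside $\t(x)$ with the Artin--Wedderburn theorem, so that the statement reduces to a computation of $\dim_{\c}\t(x)$. Since $\t(x)$ is generated by the families $\{A_i\}_{i\in[d]}$ and $\{E_i^*\}_{i\in[d]}$, each of which is closed under the conjugate-transpose map, $\t(x)$ is a $*$-closed subalgebra of $M_X(\c)$ and hence semisimple; write $\t(x)\cong\bigoplus_k M_{m_k}(\c)$. By Theorem~\ref{thm-ter}(i), $\u$ is a two-sided ideal of $\t(x)$ that is isomorphic, as a $\c$-algebra, to the simple algebra $M_{d+1}(\c)$. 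In a semisimple algebra every two-sided ideal is the sum of a subfamily of the Wedderburn blocks; as $\u$ is itself simple of dimension $(d+1)^2$, it must coincide with a single block $M_{d+1}(\c)$. Thus $\t(x)=\u\oplus\u'$, where $\u'$ is the ideal spanned by the remaining blocks and $\u'\cong\t(x)/\u$ as $\c$-algebras. By Theorem~\ref{thm-ter}(ii), $\t(x)/\u$ is commutative, and a commutative semisimple $\c$-algebra is a direct sum of copies of $\c=M_1(\c)$. Consequently $\t(x)\cong M_{d+1}(\c)\oplus M_1(\c)^{\oplus m}$ with $m=\dim_{\c}\t(x)-(d+1)^2$, and it remains to show $\dim_{\c}\t(x)=(d+1)^2+\frac{d(d+1)}{2}-b$.

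To compute $\dim_{\c}\t(x)$, I would use that $\x=K_{n_1}\wr\cdots\wr K_{n_d}$ is triply regular, so $\t(x)$ is spanned by the triple products $E_i^*A_jE_h^*$ with $i,j,h\in[d]$. Because $R_0,\dots,R_d$ partition $X\times X$ and the $E_i^*$ are diagonal matrices with $0$--$1$ entries, the matrices $E_i^*A_jE_h^*$ have pairwise disjoint supports; hence the nonzero ones are linearly independent and form a basis of $\t(x)$, so by Proposition~\ref{tripro}, $\dim_{\c}\t(x)=\#\{(i,j,h)\in[d]^3:p_{ij}^h\neq 0\}$. One then reads off the intersection numbers from Theorem~\ref{thm-bose}: for $i<j$, part~(ii) gives $A_iA_j=k_iA_j$, so $p_{ij}^h\neq 0$ only for $h=j$; for $i>j$, commutativity reduces this to $p_{ij}^h\neq 0$ only for $h=i$; for $1\le i=j\le d$, part~(iii) gives $p_{ii}^h\neq 0$ exactly for $h\in\{0,1,\dots,i-1\}$ and, in addition, for $h=i$ precisely when $n_i\neq 2$ (that is, when $\frac{n_i-2}{n_i-1}\neq 0$); and $p_{00}^0=1$. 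Counting triples: the ordered pairs with $i\neq j$ contribute $2\binom{d+1}{2}=d(d+1)$; the pair $i=j=0$ contributes $1$; and the pairs with $1\le i=j\le d$ contribute $\sum_{i=1}^d i=\frac{d(d+1)}{2}$ of the first kind together with exactly $d-b$ of the second kind. Summing, $\dim_{\c}\t(x)=d(d+1)+1+\frac{d(d+1)}{2}+(d-b)=(d+1)^2+\frac{d(d+1)}{2}-b$, so $m=\frac{d(d+1)}{2}-b$, as required.

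The only genuine labor here is the bookkeeping of the triple-product count in the second paragraph; the single delicate point is that the nonvanishing of $p_{ii}^i$ is governed precisely by the condition $n_i\neq 2$, which is exactly where the quantity $b$ enters the final formula. Every other step is a formal consequence of semisimplicity together with Theorem~\ref{thm-ter}.
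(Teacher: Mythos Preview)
Your argument is correct and follows the same overall strategy as the paper's proof: split off the simple ideal $\u\cong M_{d+1}(\c)$ inside the semisimple algebra $\t(x)$, observe that the commutative complement $\t(x)/\u$ must be a direct sum of copies of $\c$, and finish with a dimension count. The one substantive difference is that the paper simply quotes the formula $\dim_{\c}\t(x)=(d+1)^2+\tfrac{d(d+1)}{2}-b$ from \cite{BS}, whereas you derive it from scratch by counting nonzero triple products via Theorem~\ref{thm-bose} and Proposition~\ref{tripro}; your count is correct and makes the argument self-contained. One minor correction of attribution: the statement that $\u$ is an ideal of $\t(x)$ is part of Theorem~\ref{thm-ter}(ii), not (i).
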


The ideal $\mc{U}$ in the above theorem is the primary ideal of
$\t(x)$ related to the primary module in \cite{BS, Ta}. Each of the
$d(d+1)/2 -b$ non-primary ideals is one-dimensional and
spanned by a central idempotent.
All of these non-primary ideals of $\t(x)$ are described in the next
theorem.

\begin{thm}
\label{thm-ter-1} Let $\t(x)$ be the Terwilliger algebra of $\x =
K_{n_1} \wr K_{n_2} \wr \cdots \wr~K_{n_d}$, where $n_i \ge 2$ for
all $1 \le i \le d$.  For any $i \in \{1, 2, ..., d\}$ and any $h
\in [i-1]$, let
$$
F_{ih} = \left \{
\begin{array}{ll}
\frac{\sum_{j=0}^h E_i^* A_j E_i^*} {\sum_{j=0}^h k_j} -
\frac{\sum_{j=0}^{h+1} E_i^* A_j E_i^*} {\sum_{j=0}^{h+1} k_j},
 & \hbox{if } h < i-1; \\
\frac{\sum_{j=0}^{i-1} E_i^* A_j E_i^*} {\sum_{j=0}^{i-1} k_j} -
G_{ii}, & \hbox{if } h = i-1; \hhrow
\end{array}
\right.
$$Then the set
$$
\{ F_{ih} :\ i \in \{1, 2, ..., d\},\ h \in [i-1] \}
$$
has $d(d+1)/2 -b$ nonzero elements, where $b = |\{i\in \{1, 2,
\dots, d\} :\ n_i = 2 \}|$, and each nonzero element is a central
idempotent that spans a $1$-dimensional non-primary ideal of
$\t(x)$.
\end{thm}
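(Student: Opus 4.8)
The plan is to realize each nonzero $F_{ih}$ as a primitive idempotent of the non-primary part of $\t(x)$, and then read off the ideal structure from Theorem~\ref{thm-ter}. Fix $x$ and write $\kappa_h=k_0+k_1+\cdots+k_h=n_1n_2\cdots n_h$ (Theorem~\ref{thm-bose}(i)); for $0\le h\le i-1$ set $T_{ih}=\kappa_h^{-1}\sum_{j=0}^{h}E_i^*A_jE_i^*$ and recall $G_{ii}=k_i^{-1}E_i^*JE_i^*$, so that $F_{ih}=T_{ih}-T_{i,h+1}$ for $h<i-1$ and $F_{i,i-1}=T_{i,i-1}-G_{ii}$. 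Two combinatorial facts, both immediate from the explicit description of the relations $W_l$ of $\x$, drive everything. (a) For $h\le i-1$ the restriction of $\bigcup_{j=0}^{h}R_j$ to $R_i(x)$ is an equivalence relation all of whose classes have size $\kappa_h$ (this uses Theorem~\ref{thm-bose}(iv)); hence $\sum_{j=0}^{h}E_i^*A_jE_i^*$ is $\kappa_h$ times the averaging projection over those classes. Consequently $E_i^*=T_{i0},T_{i1},\dots,T_{i,i-1},G_{ii}$ is a nested chain of commuting orthogonal projections with $T_{ih}T_{ih'}=T_{i,\max(h,h')}$ and $T_{ih}G_{ii}=G_{ii}$, and since these span $\langle E_i^*A_jE_i^*\rangle_{j\in[i]}$ (using $E_i^*A_jE_i^*=0$ for $j>i$, Proposition~\ref{tripro}), that algebra is commutative. (b) For $b\ne i$, every pair $(y,z)$ with $y\in R_i(x)$, $z\in R_b(x)$ lies in the single relation $R_{\max(i,b)}$, so by Proposition~\ref{tripro} each $E_i^*A_cE_b^*$ with $b\ne i$ is either $0$ or the all-ones block $J_{R_i(x)\times R_b(x)}=\mathbf 1_{R_i(x)}\mathbf 1_{R_b(x)}^{t}$, where $\mathbf 1_S$ denotes the all-ones vector on $S$.

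From (a): $F_{ih}$ is idempotent (a difference of nested projections), the $F_{ih}$ with a fixed $i$ are pairwise orthogonal, and $F_{ih}F_{i'h'}=0$ for $i\ne i'$ since their supports lie in $R_i(x)\times R_i(x)$ and $R_{i'}(x)\times R_{i'}(x)$; in particular the nonzero ones are pairwise distinct. Comparing ranks, $\mathrm{rk}\,F_{ih}=\mathrm{rk}\,T_{ih}-\mathrm{rk}\,T_{i,h+1}$ is the drop in the number of classes, which is positive for $h<i-1$, while for $h=i-1$ it equals $(n_i-1)-1$ and so is positive exactly when $n_i>2$; thus precisely $b$ of the $F_{ih}$ vanish and $d(d+1)/2-b$ are nonzero.

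Next I would show each nonzero $F_{ih}$ is central in $\t(x)$ by checking commutation with the generators $A_c$ and $E_b^*$. Since $F_{ih}=E_i^*F_{ih}E_i^*$, one has $F_{ih}E_b^*=\delta_{bi}F_{ih}=E_b^*F_{ih}$. Every $T_{ih}$ and $G_{ii}$ fixes $\mathbf 1_{R_i(x)}$, hence $F_{ih}\mathbf 1_{R_i(x)}=0$; by (b) this annihilates $F_{ih}(E_i^*A_cE_b^*)$ for every $b\ne i$ and $(E_a^*A_cE_i^*)F_{ih}$ for every $a\ne i$, so that $F_{ih}A_c=F_{ih}(E_i^*A_cE_i^*)$ and $A_cF_{ih}=(E_i^*A_cE_i^*)F_{ih}$; these agree because $F_{ih}$ lies in the commutative algebra $\langle E_i^*A_jE_i^*\rangle_{j\in[i]}$. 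The same computation run against the spanning set $\{G_{ab}\}$ of $\u$ (Theorem~\ref{thm-ter}), using $E_i^*JE_i^*=\mathbf 1_{R_i(x)}\mathbf 1_{R_i(x)}^{t}$ and (b), gives $F_{ih}\u=\u F_{ih}=0$.

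Finally, $\t(x)$ is semisimple and, by Theorem~\ref{thm-ter}(ii), $\t(x)/\u$ is commutative, so the two-sided ideal $\u$ (which by Theorem~\ref{thm-ter}(i) is $\cong M_{d+1}(\c)$, hence simple) has an ideal complement $\mc{C}$ with $\mc{C}\cong\t(x)/\u\cong\c^{m}$, where $m=\dim\t(x)-\dim\u=d(d+1)/2-b$ by Corollary~\ref{cor-thm-ter}. Each nonzero $F_{ih}$ is a central idempotent with $F_{ih}\u=0$, so $F_{ih}=F_{ih}1_{\mc{C}}\in\mc{C}$; being a nonzero idempotent of $\mc{C}\cong\c^{m}$ it is the sum of a nonempty subset of the $m$ primitive idempotents of $\mc{C}$, and orthogonality of the $m$ distinct nonzero $F_{ih}$ forces these subsets to be pairwise disjoint, hence (by a count) singletons. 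Therefore every nonzero $F_{ih}$ is a primitive idempotent of $\mc{C}$, so $\c F_{ih}$ is one of the one-dimensional ideals of $\mc{C}$ and thus a one-dimensional two-sided ideal of $\t(x)$; it is non-primary since it lies in $\mc{C}$, not in $\u$. (These then exhaust the $d(d+1)/2-b$ non-primary ideals.) I expect the real work to be in (a) and (b) — pinning down the exact multiplication of the triple products $E_a^*A_cE_b^*$ for the wreath product and correctly tracking the degenerate one-point factors contributed by the indices with $n_i=2$; once that is done the rest is soft semisimple-algebra manipulation.
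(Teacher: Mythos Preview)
Your proof is correct and takes a genuinely different route from the paper's. The paper proceeds by direct computation: using Lemma~\ref{lem-idemp} (which is exactly your observation that each $T_{ih}$ is idempotent) and Lemma~\ref{lem-ak} (explicit formulas for $A_g\bigl(\sum_{j\le h}E_i^*A_jE_i^*\bigr)$), it verifies the identity
\[
A_gF_{ih}=F_{ih}A_g=\begin{cases}k_gF_{ih},&g\le h;\\ -\kappa_h F_{ih},&g=h+1;\\ 0,&g>h+1,\end{cases}
\]
from which the one-dimensionality of $\c F_{ih}$ as an ideal is immediate, with no appeal to the global decomposition of $\t(x)$. Your argument instead interprets the $T_{ih}$ combinatorially as nested averaging projections, reads off idempotence, orthogonality, and the rank count from that, proves centrality and $F_{ih}\,\u=0$ via the single observation $F_{ih}\mathbf 1_{R_i(x)}=0$ together with fact~(b), and then invokes Corollary~\ref{cor-thm-ter} plus a pigeonhole on primitive idempotents of $\mathcal C\cong\c^{m}$ to force each nonzero $F_{ih}$ to be primitive. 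What the paper's computation buys is the explicit eigenvalues of each $A_g$ on the one-dimensional module $\c F_{ih}$ (hence the character table of these modules), and logical independence from Corollary~\ref{cor-thm-ter}; what your approach buys is a cleaner conceptual picture (the $F_{ih}$ are visibly the primitive idempotents of the nested-partition algebra on $R_i(x)$) and an argument that avoids the case analysis in Lemma~\ref{lem-ak}.
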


In the following section, we will study the structure of the
Bose-Mesner algebra of the wreath product and derive the properties
that characterize the wreath product of one-class association
schemes. We shall see that Theorem \ref{thm-bose} is directly
deduced from Lemma \ref{lem-wreath} by induction on $d$. We will
prove the theorems \ref{thm-ter} and \ref{thm-ter-1} in Section
\ref{sect-ter}. We shall see that our derivation of all these
results is chiefly based on a set of equations in adjacency matrices
of the association scheme.

\section{The Bose-Mesner algebra of $K_{n_1} \wr K_{n_2} \wr \cdots
\wr~K_{n_d}$}

In this section, we give a description of the Bose-Mesner algebras
of wreath products of one-class \as s that will be used in the
subsequent section. Let $K_n$ denote the one-class association
scheme of order $n$; so, its unique nontrivial relation graph is the
complete graph on $n$ vertices.

\begin{lemma}
\label{lem-wreath} Let $\x =$ \xr\ be an \as. Suppose $\x = \y \wr
K_n$ for an \as\  $\y = $ \ys. Then $e = d-1$, and by renumbering
$R_1, R_2, \dots, R_d$ if necessary, the following hold.
\begin{enumerate}
\item[$(\rm{i})$] The set $\{R_i \}_{i\in [d-1]}$ is a closed subset of $\x$
such that the Bose-Mesner subalgebra $\langle
A_i\rangle_{i\in[d-1]}$ is exactly isomorphic to the Bose-Mesner
algebra of $\y$.
\item[$(\rm{ii})$] For each $i\in \{1, 2, \dots, d\}$,
\begin{equation}
\label{eq-aiad} A_i A_d = k_i A_d,
\end{equation}
and
\begin{equation}
\label{eq-adsquare} (A_d)^2 =
\left(\sum\limits_{j=0}^{d-1}k_j\right)
\left\{(n-1)\left(\sum\limits_{j=0}^{d-1}A_j\right) + (n-2)A_d
\right\}.
\end{equation} \end{enumerate}
\end{lemma}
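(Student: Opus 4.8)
The plan is to work directly from the definition of the wreath product $\mathcal{X} = \mathcal{Y} \wr K_n$ together with the explicit description of its adjacency matrices as Kronecker products. Write $K_n$ with adjacency matrices $I_n$ and $J_n - I_n$, and let $B_0 = I_u, B_1, \dots, B_e$ be the adjacency matrices of $\mathcal{Y}$ on the $u$-element set $Y$. By the formulas recalled in Section~2, the adjacency matrices of $\mathcal{X}$ are
$$
C_0 = I_n \otimes B_0,\ C_1 = I_n \otimes B_1,\ \dots,\ C_e = I_n \otimes B_e,\ C_{e+1} = (J_n - I_n) \otimes J_u.
$$
(Here I have swapped the roles of $\mathcal{X}$ and $\mathcal{Y}$ relative to the excerpt's display so that the ``outer'' scheme is $K_n$; this is harmless and just fixes notation.) In particular $e + 1 = d$, so $e = d - 1$, which is the first claim. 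Renumbering so that $A_i$ corresponds to $C_i$ for $i \in [d-1]$ and $A_d = C_{e+1} = (J_n - I_n)\otimes J_u$, part~(i) is immediate: the relations $R_0, \dots, R_{d-1}$ of $\mathcal{X}$ sit inside the ``$y_1 = y_2$-diagonal block'' pattern, their adjacency matrices are $I_n \otimes B_i$, and the map $A_i \mapsto B_i$ is clearly a multiplicative bijection since $(I_n \otimes B_i)(I_n \otimes B_j) = I_n \otimes (B_iB_j)$ and the structure constants match. Hence $\{R_i\}_{i\in[d-1]}$ is a closed subset and $\langle A_i\rangle_{i\in[d-1]}$ is exactly isomorphic to the Bose–Mesner algebra of $\mathcal{Y}$.

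For part~(ii), everything reduces to two Kronecker-product computations. First, $A_i A_d = (I_n \otimes B_i)\big((J_n - I_n)\otimes J_u\big) = \big(I_n(J_n - I_n)\big) \otimes (B_i J_u)$. Now $B_i J_u = k_i^{\mathcal{Y}} J_u$ where $k_i^{\mathcal{Y}}$ is the $i$th valency of $\mathcal{Y}$, and the valency of $R_i$ in $\mathcal{X}$ equals the valency of $S_i$ in $\mathcal{Y}$ for $i \le d-1$ (the relation just gets copied across the $n$ blocks), so $k_i = k_i^{\mathcal{Y}}$. Thus $A_i A_d = (J_n - I_n) \otimes (k_i J_u) = k_i\big((J_n - I_n)\otimes J_u\big) = k_i A_d$, giving \eqref{eq-aiad}. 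Second, $(A_d)^2 = \big((J_n - I_n)^2\big)\otimes (J_u^2) = \big((n-2)(J_n - I_n) + (n-1)I_n\big)\otimes (u J_u)$, using $(J_n - I_n)^2 = (n-2)(J_n - I_n) + (n-1)I_n$ and $J_u^2 = u J_u$. Expanding, $(A_d)^2 = u(n-2)\big((J_n-I_n)\otimes J_u\big) + u(n-1)\big(I_n \otimes J_u\big)$. The first term is $u(n-2)A_d$. For the second, note $J_u = \sum_{j=0}^{d-1} B_j$, hence $I_n \otimes J_u = \sum_{j=0}^{d-1}(I_n \otimes B_j) = \sum_{j=0}^{d-1} A_j$. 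Finally $u = \sum_{j=0}^{d-1} k_j^{\mathcal{Y}} = \sum_{j=0}^{d-1} k_j$ (the order of $\mathcal{Y}$ is the sum of its valencies). Substituting gives
$$
(A_d)^2 = \Big(\sum_{j=0}^{d-1} k_j\Big)(n-2)A_d + \Big(\sum_{j=0}^{d-1} k_j\Big)(n-1)\sum_{j=0}^{d-1} A_j,
$$
which is exactly \eqref{eq-adsquare}.

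I do not anticipate a serious obstacle here: the whole argument is a bookkeeping exercise in Kronecker products plus the elementary identities $(J_n - I_n)^2 = (n-2)(J_n - I_n) + (n-1)I_n$, $J_u^2 = u J_u$, and $J_u = \sum_j B_j$. The only point requiring a little care is the renumbering and the identification of valencies: one must check that under the wreath-product construction the valency of the ``lifted'' relation $R_i$ ($i \le d-1$) in $\mathcal{X}$ genuinely equals $k_i^{\mathcal{Y}}$ and that the order $u$ of $\mathcal{Y}$ equals $\sum_{j=0}^{d-1} k_j$ — both follow directly from axiom~(2) applied to $\mathcal{Y}$ (row sums of $J_u$) and from the block structure of the $C_i$. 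Once the notation is pinned down, \eqref{eq-aiad} and \eqref{eq-adsquare} drop out of the two displayed product computations.
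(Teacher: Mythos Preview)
Your proposal is correct and follows essentially the same route as the paper's proof: write the adjacency matrices of $\mathcal{Y}\wr K_n$ as Kronecker products $I_n\otimes B_j$ and $(J_n-I_n)\otimes J_u$, then read off $e=d-1$, the closed-subset property, and both product formulas from the identities $(I_n\otimes B_j)\big((J_n-I_n)\otimes J_u\big)=\tilde k_j\,(J_n-I_n)\otimes J_u$ and $\big((J_n-I_n)\otimes J_u\big)^2=u\big((n-1)I_n\otimes J_u+(n-2)(J_n-I_n)\otimes J_u\big)$. The paper's argument is just a terser version of yours.
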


\begin{proof}[Proof]
Let $B_j$ be the adjacency matrix of $S_j$, for $j \in [e]$, and
$\bar{J}_n = J_n - I_n$. Then the adjacency matrices of $\y \wr K_n$
are
$$
I_n \otimes B_0, \ I_n \otimes B_1,  \cdots, I_n \otimes B_e, \
\bar{J}_n\otimes J_u,
$$
where $u = |Y|$. Thus, $e = d-1$. Let the valency of $S_j$ be
$\tilde{k_j}$. Then clearly
$$
(I_n \otimes B_j) (\bar{J}_n\otimes J_u) = \tilde{k_j}
(\bar{J}_n\otimes J_u), \ \ 1 \le j \le e,
$$
and
\begin{eqnarray*}
(\bar{J}_n\otimes J_u)^2  & = & u \Big( (n-1) I_n \otimes J_u +
     (n-2) \bar{J}_n\otimes J_u \Big) \\
 & = & u \Big( (n-1) \sum_{j=0}^e (I_n \otimes B_j) +
(n-2) (\bar{J}_n\otimes J_u) \Big ).
\end{eqnarray*}
By renumbering $R_1, R_2, \dots, R_d$ if necessary, we may assume
that the adjacency matrix of $R_i$ is $A_i = I_n \otimes B_i$, $1
\le i \le d-1$, and the adjacency matrix of $R_d$ is $A_d =
\bar{J}_n\otimes J_u$. Then the valency of $R_i$ is $\tilde{k_i}$,
$1 \le i \le d-1$, and (\ref{eq-aiad}), (\ref{eq-adsquare}) holds.
Furthermore, clearly $\{R_1, R_2, \dots, R_{d-1}\}$ is a closed
subset, and the Bose-Mesner subalgebra $\langle A_i\rangle_{i\in
[d-1]}$ is exactly isomorphic to the Bose-Mesner algebra of $\y$.
\end{proof}

As a consequence, we have Theorem \ref{thm-bose}, from which we can
easily prove the following proposition. We will need this
proposition in Section \ref{sect-ter}.

\begin{prop}
\label{prop-bose} Let $d$ be a positive integer, and let $n_1, n_2,
\dots, n_d$ be positive integers greater than or equal to $2$. Let
$\x=(X,\{R_i\}_{i\in[d]}) = K_{n_1} \wr K_{n_2} \wr \cdots \wr
K_{n_d}$, and let $A_0, A_1, \dots, A_d$ be the adjacency matrices
of $\x$. Then the following hold.
\begin{enumerate}
\item[$(\rm{i})$] For any $h \in [d]$,
$$
\left( \sum_{i=0}^h A_i \right)^2 = \Big( \sum_{i=0}^h k_i \Big)
\Big( \sum_{i=0}^h A_i \Big).
$$
\item[$(\rm{ii})$] For any $g, h \in [d]$ such that $g \le h$,
$$
A_g \Big( \sum_{i=0}^h A_i \Big) =
 k_g \Big( \sum_{i=0}^h A_i \Big).
$$\end{enumerate}
\end{prop}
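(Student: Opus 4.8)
\medskip
\noindent\textbf{Proof proposal.} The plan is to deduce both parts from Theorem~\ref{thm-bose}: first prove (ii) by a direct expansion of $A_g\big(\sum_{i=0}^h A_i\big)$, and then obtain (i) simply by summing (ii) over $g = 0, 1, \dots, h$.

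For (ii), fix $g \le h$ and write $A_g\big(\sum_{i=0}^h A_i\big) = \sum_{i=0}^h A_g A_i$, splitting the sum according to whether $i > g$, $i = g$, or $i < g$. When $i > g$, Theorem~\ref{thm-bose}(ii) gives $A_g A_i = k_g A_i$. When $i < g$, commutativity of the scheme together with Theorem~\ref{thm-bose}(ii) applied to the smaller index $i$ gives $A_g A_i = A_i A_g = k_i A_g$. For the middle term, Theorem~\ref{thm-bose}(iii) gives $A_g^2 = k_g\big(A_0 + A_1 + \cdots + A_{g-1}\big) + k_g\,\tfrac{n_g - 2}{n_g - 1}\,A_g$ when $g \ge 1$, while $g = 0$ is trivial since $A_0 = I$ and $k_0 = 1$. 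Collecting terms, each $A_i$ with $i \ne g$ picks up coefficient $k_g$, and the coefficient of $A_g$ is $\sum_{i=0}^{g-1} k_i + k_g\,\tfrac{n_g - 2}{n_g - 1}$.

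The one step that needs a small argument is checking that this coefficient of $A_g$ also equals $k_g$. For that I would invoke the valency formula of Theorem~\ref{thm-bose}(i), namely $k_g = \big(k_0 + k_1 + \cdots + k_{g-1}\big)(n_g - 1)$, which gives $\sum_{i=0}^{g-1} k_i = k_g/(n_g - 1)$; substituting, $\tfrac{k_g}{n_g - 1} + k_g\,\tfrac{n_g - 2}{n_g - 1} = \tfrac{k_g(n_g - 1)}{n_g - 1} = k_g$. Hence $A_g\big(\sum_{i=0}^h A_i\big) = k_g\big(\sum_{i=0}^{g-1} A_i + A_g + \sum_{i=g+1}^h A_i\big) = k_g\big(\sum_{i=0}^h A_i\big)$, which is (ii).

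Part (i) then follows at once: $\big(\sum_{i=0}^h A_i\big)^2 = \sum_{g=0}^h A_g\big(\sum_{i=0}^h A_i\big) = \sum_{g=0}^h k_g\big(\sum_{i=0}^h A_i\big) = \big(\sum_{g=0}^h k_g\big)\big(\sum_{i=0}^h A_i\big)$. The main obstacle is not conceptual but purely the bookkeeping in (ii): correctly handling the three index ranges (in particular using commutativity to reduce the case $i < g$) and verifying that the coefficient of $A_g$ collapses to $k_g$ via the valency recursion. A slightly longer alternative would be an induction on $d$ that isolates the top relation $R_d$ and uses equations (\ref{eq-aiad}) and (\ref{eq-adsquare}) of Lemma~\ref{lem-wreath}, but the direct computation above is cleaner and uses only Theorem~\ref{thm-bose}.
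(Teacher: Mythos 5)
Your argument is correct, but it is organized quite differently from the paper's. For part (ii) you expand $A_g\sum_{i=0}^h A_i$ term by term, splitting into the ranges $i<g$, $i=g$, $i>g$, using commutativity to handle $i<g$, Theorem~\ref{thm-bose}(iii) for the diagonal term, and the valency recursion $k_g=\bigl(\sum_{i=0}^{g-1}k_i\bigr)(n_g-1)$ to show the coefficient of $A_g$ collapses to $k_g$; all of these steps check out. The paper instead proves (ii) by the complementation trick: it writes $\sum_{i=0}^h A_i = J-\sum_{i=h+1}^d A_i$ and uses $A_gJ=k_gJ$ together with $A_gA_i=k_gA_i$ for $i>h\ge g$, which sidesteps the computation of $A_g^2$ and the coefficient bookkeeping entirely. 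For part (i) the difference runs the other way: the paper gives an independent induction on $h$, expanding $\bigl(\sum_{i=0}^{h-1}A_i+A_h\bigr)^2$ and invoking Theorem~\ref{thm-bose}(ii),(iii) again, whereas you simply sum (ii) over $g=0,1,\dots,h$, which is more economical and makes the logical dependence (i)$\Leftarrow$(ii) explicit. In short, your proof trades the paper's slicker treatment of (ii) for a cleaner derivation of (i); both routes rest only on Theorem~\ref{thm-bose} and are equally valid.
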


\begin{proof}[Proof]
$(\rm{i})$ We use induction on $h$. Clearly $(\rm{i})$ holds when $h
= 0$. Assume that $h > 0$ and $(\rm{i})$ holds for $h-1$. Then we
show that $(\rm{i})$ holds for $h$. Recall that for any $g \in [d]$,
$k_0 + k_1 + \cdots +k_g = n_1 n_2 \cdots n_g$ and $k_g = (k_0 + k_1
+ \cdots +k_{g-1})(n_g - 1)$; and so, by Theorem \ref{thm-bose},
$$
(A_h)^2 = k_h \Big ( \sum_{i=0}^{h-1} A_i \Big) + \Big (
\sum_{i=0}^{h-1} k_i \Big)(n_h -2) A_h.
$$
Thus, the induction hypothesis and Theorem \ref{thm-bose} yield that
\begin{eqnarray*}
\left( \sum_{i=0}^h A_i \right)^2 & = &
\left( \sum_{i=0}^{h-1} A_i \right)^2 +
2 \Big ( \sum_{i=0}^{h-1} A_i \Big) A_h + (A_h)^2 \\
 & = & \Big ( \sum_{i=0}^{h-1} k_i \Big)
      \Big ( \sum_{i=0}^{h-1} A_i \Big) +
   2 \Big ( \sum_{i=0}^{h-1} k_i \Big) A_h
  + k_h \Big ( \sum_{i=0}^{h-1} A_i \Big) +
\Big ( \sum_{i=0}^{h-1} k_i \Big)(n_h -2) A_h \\
& = & \Big ( \sum_{i=0}^h k_i \Big)
      \Big ( \sum_{i=0}^h A_i \Big).
\end{eqnarray*}

$(\rm{ii})$ Since $g \le h$, by Theorem \ref{thm-bose} we see that
$$
A_g \Big( \sum_{i=0}^h A_i \Big) = A_g \Big( J - \sum_{i= h+1}^d A_i
\Big) = A_g J - \sum_{i= h+1}^d A_g A_i = k_g J - \sum_{i= h+1}^d
k_g A_i = k_g \Big( \sum_{i=0}^h A_i \Big).
$$
\end{proof}

The next proposition says that the equations
in Theorem \ref{thm-bose}(ii) characterize the Bose-Mesner algebra
of the wreath product of one-class \as s.

\begin{prop}
Let $\x=(X,\{R_i\}_{i\in[d]})$ be a commutative \as, and
$A_0, A_1$, \dots, $A_d$  the adjacency matrices of $\x$. Assume
that
$$
A_i A_j = k_i A_j, \ 0 \le i < j \le d.
$$
Then the Bose-Mesner algebra of $\x$ is exactly isomorphic to the
Bose-Mesner algebra of the wreath product of some one-class \as s.
\end{prop}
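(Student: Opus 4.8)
The plan is to reconstruct the wreath structure purely from the relation $A_iA_j = k_iA_j$ for $i<j$, by induction on $d$. The base case $d=1$ is immediate: a one-class scheme $\mc{X}$ is $K_{n_1}$ with $n_1 = |X|$. For the inductive step I would single out the ``last'' relation $R_d$ and show that contracting it recovers a smaller wreath product. Concretely, I expect $\{R_0, R_1, \dots, R_{d-1}\}$ to be a closed subset: indeed, for $i, j < d$ the hypothesis (applied in whichever order $i,j$ come) together with commutativity forces $A_iA_j$ to be supported on $\{A_0,\dots,A_{\max(i,j)}\}$, since any term $A_h$ with $h>\max(i,j)$ would make $p_{ij}^h \neq 0$, but then $A_hA_i = k_hA_i$ with $h>i$ contradicts the symmetry $p_{ij}^h = p_{i h}^{?}$-type counting (more carefully: multiply $A_iA_j$ on the left by $A_h$ and compare). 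So the first step is a bookkeeping argument isolating $A_d$ and proving closure of the complement.

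Once $\Delta := [d-1]$ is a closed subset, the theory recalled in the Preliminaries gives a subscheme $\mc{X}_\Delta = (X_\Delta, \{\overline{R_i}\}_{i\in\Delta})$ whose Bose-Mesner algebra is exactly isomorphic to $\langle A_i\rangle_{i\in\Delta}$ via $A_i \mapsto \overline{A_i}$. The defining hypothesis $A_iA_j = k_iA_j$ restricts to the $\overline{A_i}$ (exact isomorphism preserves structure constants), so by the inductive hypothesis $\mc{X}_\Delta \cong K_{m_1} \wr \cdots \wr K_{m_{d-1}}$ for suitable $m_i \geq 2$. It remains to check that adjoining $R_d$ back in amounts precisely to wreathing with one more $K_n$. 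Here I would use $A_iA_d = k_iA_d$ for all $i<d$ to see that $A_d$ ``collapses'' every earlier fiber: writing $K := A_0 + \cdots + A_{d-1}$, one gets $K A_d = (\sum_{i<d} k_i) A_d =: m A_d$, so $A_d$ has constant row and column sums within each $K$-block and is zero on the diagonal blocks; combined with $A_0 + \cdots + A_d = J$, this pins $A_d$ down as $\bar J_n \otimes J_{|X_\Delta|}$ for $n = |X|/|X_\Delta|$ (and $n\geq 2$ since $R_d$ is nonempty). That is exactly the adjacency-matrix description of $\mc{X}_\Delta \wr K_n$ given before Lemma \ref{lem-wreath}, so we may take $n_d = n$ and $n_i = m_i$ for $i < d$, and the map $A_i \mapsto C_i$ (the adjacency matrices of the wreath product) is an exact isomorphism of Bose-Mesner algebras.

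The main obstacle I anticipate is the combinatorial closure step — showing $A_iA_j \in \langle A_0,\dots,A_{\max(i,j)}\rangle$ for $i,j < d$ using only the stated quadratic relations and commutativity, without already knowing the valency identities of Theorem \ref{thm-bose}. The clean way around this: observe that the hypothesis says each $A_j$ satisfies $A_iA_j = k_iA_j$ for \emph{all} $i<j$, equivalently $\big(\sum_{i<j}A_i\big)A_j = \big(\sum_{i<j}k_i\big)A_j$, so each partial sum $P_j := \sum_{i\leq j} A_i$ is an eigenvector-like object; multiplying out shows $P_j^2 = (\sum_{i\le j}k_i)P_j$ up to lower terms, forcing (after a short argument with the fact that the $A_i$ are $01$-matrices summing to $J$) that each $P_j$ is the adjacency matrix of an equivalence relation. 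A nested chain of equivalence relations $P_0 \subseteq P_1 \subseteq \cdots \subseteq P_d = J$ is exactly a wreath decomposition, and reading off the block sizes $n_i$ as successive index ratios finishes the proof. I would present the argument in the induction form above, since it dovetails with Lemma \ref{lem-wreath} and Proposition \ref{prop-bose}, but flag the equivalence-relation reformulation as the conceptual heart.
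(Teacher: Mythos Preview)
Your inductive route via subschemes is viable, but it is genuinely different from the paper's argument, which is direct and dispenses with induction altogether. The step you correctly flag as the crux---closure of $\{R_0,\dots,R_{j}\}$---the paper settles in two lines by computing $(A_j)^2$ from the hypothesis alone: writing $\bigl(\sum_{i<j}A_i\bigr)A_j$ once as $\bigl(\sum_{i<j}k_i\bigr)A_j$ and once as $\bigl(J-\sum_{i\ge j}A_i\bigr)A_j = k_j\sum_{i\le j}A_i - (A_j)^2$ (using commutativity and the hypothesis again for the terms with $i>j$) gives
\[
(A_j)^2 \;=\; k_j\sum_{i<j}A_i \;+\; \Bigl(k_j-\sum_{i<j}k_i\Bigr)A_j,
\]
which simultaneously proves closure of every $[j]$ and determines all remaining structure constants. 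From there the paper reads off $n_j := 1 + k_j/\sum_{i<j}k_i$ (an integer $\ge 2$ by the divisibility coming from the nested closed subsets), verifies $k_j = n_1\cdots n_{j-1}(n_j-1)$, and simply observes that the structure constants now coincide with those listed in Theorem~\ref{thm-bose} for $K_{n_1}\wr\cdots\wr K_{n_d}$, giving the exact isomorphism directly. Your approach buys a more geometric picture (the nested equivalence relations $P_j$ and the tensor identification of $A_d$), but at the cost of the subscheme machinery and an induction; the paper's approach buys brevity by noticing that the single displayed identity above is all one needs.
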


\begin{proof}[Proof]
It follows from $A_i A_j = k_i A_j$, $0 \le i < j \le d$, that
$k_0, k_1, ..., k_{d-1}$ are the valencies of $\x$, and
for any $1 \le j \le d$, we have that
$$
\left ( \sum_{i=0}^{j-1} A_i \right ) A_j =
\sum_{i=0}^{j-1} A_i A_j =
\left ( \sum_{i=0}^{j-1} k_i \right ) A_j.
$$
But on the other hand,
$$
\left ( \sum_{i=0}^{j-1} A_i \right ) A_j =
\left ( J - \sum_{i=j}^{d} A_i \right ) A_j
= k_j J - (A_j)^2 - k_j \sum_{i=j+1}^{d} A_i
= k_j \sum_{i=0}^{j} A_i - (A_j)^2.
$$
Thus,
$$
(A_j)^2 = k_j \sum_{i=0}^{j-1} A_i +
\left( k_j - \sum_{i=0}^{j-1} k_i \right ) A_j.
$$
Therefore,
for every $i\in [d]$, $\{R_h\}_{h\in [i]}$ is a closed subset of
$\x$.  In particular,  for every $1 \le i \le d$, the subset
$\{R_h\}_{h\in [i-1]}$ is also a
closed subset of $\{R_h\}_{h\in [i]}$. So the valency of the subset
$\{R_h\}_{h\in [i-1]}$ divides the valency of the subset
$\{R_h\}_{h\in [i]}$.  That is, $k_0 + k_1 + \cdots + k_{i-1}$
divides $k_0 + k_1 + \cdots + k_i$. ($k_d$ is the valency of $R_d$.)
Hence, $k_0 + k_1 + \cdots + k_{i-1}$ divides $k_i$. Let
$$
n_i = \frac{k_i}{k_0 + k_1 + \cdots + k_{i-1}} + 1, \
 1 \le i \le d.
$$
Then each $n_i$ is a positive integer greater than or equal to $2$.
Clearly $1 + k_1 = n_1$, and for any $2 \le j \le d$,
$$
1 + k_1 + \cdots + k_j = n_1 \cdots n_j  \quad \hbox{and} \quad
k_j = n_1 \cdots n_{j-1} (n_j - 1).
$$
Hence,
$$
k_j - \sum_{i=0}^{j-1} k_i = \frac{k_j(n_j - 2)}{n_j - 1}, \
 1 \le j \le d.
$$
Therefore, by Theorem \ref{thm-bose} we see that the Bose-Mesner
algebra of $\x$ is exactly isomorphic to the Bose-Mesner algebra of
the \as\ $K_{n_1}\wr K_{n_2}\wr \cdots \wr K_{n_d}$.
\end{proof}

\section{The Terwilliger algebra of $K_{n_1} \wr K_{n_2} \wr \cdots
\wr~K_{n_d}$  \label{sect-ter} }
\setcounter{equation}{0}
The purpose of this section is to prove our main results stated in
Theorems \ref{thm-ter} and \ref{thm-ter-1} as well as Corollary
\ref{cor-thm-ter}. In order to prove them, we shall derive several
technical lemmas first. These lemmas also help us to understand the
Terwilliger algebra of the wreath product.

Throughout the section, let $\x=(X,\{R_i\}_{i\in[d]}) = K_{n_1} \wr
K_{n_2} \wr \cdots \wr~K_{n_d}$, where $d$, $n_1$, $n_2$, ..., $n_d$
are positive integers greater than or equal to $2$. Let $v=|X| = n_1
n_2 \cdots n_d$. Let $\{A_i\}_{i\in [d]}$ be the basis of the
Bose-Mesner algebra of $\x$, and let $\{E_i^*\}_{i\in [d]}$
be the basis of the dual Bose-Mesner
algebra of $\mc{X}$ with respect to an arbitrary (fixed) $x\in X$.
 Let $J = J_v$ be the $v \times v$ all-ones matrix.

The next two lemmas will be used heavily in the sequel. They are
found in \cite{BS}, but the proofs here are different from those in
\cite{BS}. We show that these results are also derived from
Proposition \ref{tripro} and Theorem \ref{thm-bose}.

\begin{lemma}\cite[Theorem 3.5]{BS}
\label{lem-nonzero} For any $i, j, h \in [d]$, the following hold.

$(\rm{i})$ If $i \ne j$, then $E_i^* A_j E_h^* \ne 0$ if and only if
$h = \max\{i, j\}$.

$(\rm{ii})$ If $i < j$, then $E_i^* A_h E_j^* \ne 0$ if and only if
$h = j$.

$(\rm{iii})$ If $h > i$, then $E_i^* A_i E_h^* = 0$.

$(\rm{iv})$ If $h < j$, then $E_i^* A_h E_j^* \ne 0$ if and only if
$i = j$.
\end{lemma}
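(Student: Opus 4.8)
The plan is to reduce everything to the combinatorial criterion of Proposition~\ref{tripro}, namely that $E_i^* A_h E_j^* = 0$ if and only if $p_{hj}^{i} = 0$ (note the index placement: $E_i^* A_h E_j^* \neq 0$ iff $p_{h\,j}^{\,i} \neq 0$, since $E_i^* A_h E_j^*$ records paths from the $j$-th subconstituent to the $i$-th subconstituent via an $h$-edge, which is counted by $p_{hj}^i$ when we fix $(x,w)\in R_i$ with $w$ in the $j$-th subconstituent). So the whole lemma becomes a question about which intersection numbers $p_{hj}^i$ of $\x = K_{n_1}\wr\cdots\wr K_{n_d}$ are nonzero, and Theorem~\ref{thm-bose}(ii),(iii) already tells us exactly that: from $A_h A_j = k_h A_j$ for $h<j$ we read off that $p_{hj}^l \neq 0$ iff $l = j$ (for $h<j$); from $(A_i)^2 = k_i(A_0+\cdots+A_{i-1}+\tfrac{n_i-2}{n_i-1}A_i)$ we read off $p_{ii}^l \neq 0$ iff $l \le i$ (with the $l=i$ term present precisely when $n_i \geq 3$, a fact we will not need for the present lemma since it only asks about vanishing for specified index ranges, all of which have $l<i$ or $l=\max\{i,j\}$). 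That is the entire input.

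Concretely, I would organize the proof by the four parts. For (ii), $i<j$ forces us to look at $E_i^* A_h E_j^*$, which is nonzero iff $p_{hj}^i \neq 0$. If $h < j$: the equation $A_h A_j = k_h A_j$ gives $p_{hj}^l \neq 0 \iff l=j$, and since $i < j$ we get $p_{hj}^i = 0$, so $E_i^* A_h E_j^* = 0$ — contradicting $h=j$ would be needed, but here $h<j<$ can't equal $j$, consistent. If $h = j$: then $p_{jj}^i \neq 0$ iff $i \le j$, and $i<j$ gives $i\le j$, hence nonzero. If $h > j > i$: by Theorem~\ref{thm-bose}(ii) with the roles $j < h$, $A_j A_h = k_j A_h$, so $p_{jh}^l \neq 0 \iff l = h$; by commutativity $p_{hj}^l = p_{jh}^l$, nonzero only for $l=h>i$, so $p_{hj}^i = 0$. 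Thus $E_i^* A_h E_j^* \neq 0 \iff h=j$, which is (ii). Part (iv) is the symmetric statement: $h < j$ and we ask when $E_i^* A_h E_j^* \neq 0$, i.e. $p_{hj}^i \neq 0$; since $h<j$ we have $A_h A_j = k_h A_j$ so $p_{hj}^i \neq 0 \iff i = j$. Part (iii): $h > i$ and we look at $E_i^* A_i E_h^*$, nonzero iff $p_{ih}^i \neq 0$; since $i < h$, $A_i A_h = k_i A_h$ gives $p_{ih}^l \neq 0 \iff l = h \neq i$, so it is $0$.

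For part (i), with $i \neq j$: $E_i^* A_j E_h^* \neq 0 \iff p_{jh}^i \neq 0$. Consider the two cases $i < j$ and $i > j$. If $i < j$: we need $p_{jh}^i \neq 0$; writing this as an intersection number, a path $x \to z \to w$ with $(x,w)\in R_i$, $(x,z)\in R_j$, $(z,w)\in R_h$. Using $A_j A_h$ or $A_h A_j$ and the relations: if $h \le j$ then $A_h$ absorbs into $A_j$-side giving $p_{hj}^l$ supported on $l = j \neq i$ (when $h<j$) or on $l \le j$ (when $h=j$), and we must separately check whether $i$ can be hit — when $h=j$, $p_{jj}^l\neq 0$ for all $l\le j$, so $p_{jj}^i \neq 0$ since $i<j$; so $h=j$ works, but we also need that no smaller $h$ works and that $h=i$ fails (it does since $p_{ji}^l = p_{ij}^l = k_i A_j$-supported, i.e. $l=j\neq i$ as $i<j$... wait $p_{ij}^l \neq 0 \iff l = j$, and is $i=j$? no). The cleanest route is: for $h \neq \max\{i,j\}$ show $p_{jh}^i = 0$ by invoking the appropriate product formula, and for $h = \max\{i,j\}$ exhibit nonvanishing directly (either $p_{jj}^i \neq 0$ when $j=\max$, using (iii) of Theorem~\ref{thm-bose}, or $p_{ji}^j = k_j$-type reasoning... more carefully $p_{ij}^j \neq 0$ when $\max = i$). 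I expect the bookkeeping of part (i) — keeping straight which of $i,j,h$ plays which role and correctly translating $E_a^* A_b E_c^*$ into $p_{bc}^a$ — to be the main obstacle; it is purely a matter of care, not depth. Once the translation is fixed, each case is a one-line appeal to Theorem~\ref{thm-bose}.
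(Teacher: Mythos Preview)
Your approach is correct and essentially identical to the paper's: both reduce the question to Proposition~\ref{tripro} and then read off which intersection numbers vanish from the product formulas in Theorem~\ref{thm-bose}(ii),(iii). Your index convention $E_i^* A_h E_j^* \ne 0 \iff p_{hj}^i \ne 0$ differs from the paper's stated form $E_i^* A_j E_h^* \ne 0 \iff p_{ij}^h \ne 0$, but the two are equivalent in a symmetric scheme (by transposing, or via $k_h p_{ij}^h = k_i p_{jh}^i$), so this causes no trouble.
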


\begin{proof}[Proof]
$(\rm{i})$ Since $i \ne j$, $p_{ij}^h \ne 0$ if and only if $h =
\max\{i, j\}$ by Theorem \ref{thm-bose}. So $E_i^* A_j E_h^* \ne 0$
if and only if $h = \max\{i, j\}$ by Proposition \ref{tripro}.

Similarly, $(\rm{ii})$ and $(\rm{iii})$ follow from Theorem
\ref{thm-bose} and Proposition \ref{tripro}, and $(\rm{iv})$ follows
from (i) and (iii).
%
%
\end{proof}

\begin{lemma}\cite[Lemma 3.7]{BS}
\label{lem-ter-basic} For any $i, j, h \in [d]$, the following hold.

$(\rm{i})$ If $i < j$, then $E_i^* A_j E_j^* = E_i^* A_j$, $E_j^*
A_j E_i^* = A_j E_i^*$, and $E_j^* A_i E_j^* = A_i E_j^*$.

$(\rm{ii})$ If $i < j$, then $E_i^* A_j E_j^* = E_i^* J E_j^*$.

$(\rm{iii})$ $E_i^* J E_i^* = E_i^* (A_0 + A_1 + \cdots + A_i)
E_i^*$.

$(\rm{iv})$ $A_i E_i^* = (E_0^* + E_1^* + \cdots + E_i^*) A_i E_i^*$
and $E_i^* A_i = E_i^* A_i (E_0^* + E_1^* + \cdots + E_i^*)$.

\end{lemma}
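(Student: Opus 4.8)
The plan is to prove each of the four parts by combining the triple-product vanishing result of Lemma \ref{lem-nonzero} with the ``telescoping'' identity $\sum_{h\in[d]}E_h^* = I$. For part $(\mathrm{i})$, suppose $i<j$. Expanding $E_i^*A_j = E_i^*A_j\bigl(\sum_{h\in[d]}E_h^*\bigr) = \sum_{h\in[d]}E_i^*A_jE_h^*$, Lemma \ref{lem-nonzero}$(\mathrm{i})$ (with $i\ne j$) says the only surviving term is the one with $h=\max\{i,j\}=j$, so $E_i^*A_j = E_i^*A_jE_j^*$. Transposing this identity and using $A_j^t=A_j$, $(E_h^*)^t=E_h^*$ gives $A_jE_i^* = E_j^*A_jE_i^*$. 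For the third identity $E_j^*A_iE_j^* = A_iE_j^*$, write $A_iE_j^* = \bigl(\sum_h E_h^*\bigr)A_iE_j^* = \sum_h E_h^*A_iE_j^*$; by Lemma \ref{lem-nonzero}$(\mathrm{iv})$ a term $E_h^*A_iE_j^*$ with $i<j$ is nonzero only when $h=j$, so $A_iE_j^* = E_j^*A_iE_j^*$.

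For part $(\mathrm{ii})$, again with $i<j$: since $A_0+A_1+\cdots+A_d = J$, we have $E_i^*JE_j^* = \sum_{h=0}^d E_i^*A_hE_j^*$. By Lemma \ref{lem-nonzero}$(\mathrm{ii})$ (with $i<j$), the term $E_i^*A_hE_j^*$ is nonzero only for $h=j$, so $E_i^*JE_j^* = E_i^*A_jE_j^*$, as claimed. Part $(\mathrm{iii})$ is the analogous computation with $i=j$: $E_i^*JE_i^* = \sum_{h=0}^d E_i^*A_hE_i^*$, and by Proposition \ref{tripro} the term $E_i^*A_hE_i^*$ vanishes unless $p_{ih}^i\ne 0$; by Theorem \ref{thm-bose} (parts (ii) and (iii), i.e.\ $A_hA_i = k_h A_i$ for $h<i$ and the formula for $(A_i)^2$), $p_{ih}^i\ne 0$ exactly when $h\le i$. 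Hence $E_i^*JE_i^* = E_i^*(A_0+A_1+\cdots+A_i)E_i^*$.

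Finally, part $(\mathrm{iv})$: expand $A_iE_i^* = \bigl(\sum_{h=0}^d E_h^*\bigr)A_iE_i^* = \sum_{h=0}^d E_h^*A_iE_i^*$. A term with $h=i$ is present; a term with $h>i$ vanishes by Lemma \ref{lem-nonzero}$(\mathrm{iii})$; and a term with $h<i$ is $E_h^*A_iE_i^*$, which by Lemma \ref{lem-nonzero}$(\mathrm{i})$ (taking the roles so that the ``$A_j$'' index is $i$ and the right projector index is $i$) is nonzero only if $i=\max\{h,i\}$, which is automatic, so such terms do survive — precisely the terms with $h\le i$ remain, giving $A_iE_i^* = (E_0^*+\cdots+E_i^*)A_iE_i^*$. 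Transposing yields the companion identity $E_i^*A_i = E_i^*A_i(E_0^*+\cdots+E_i^*)$. The only subtlety to watch is bookkeeping of which inequality direction in Lemma \ref{lem-nonzero} applies in each expansion — it is easy to invoke the wrong clause — so I would be careful in each step to check whether the free projector index sits to the left or the right of the fixed one and whether the adjacency index exceeds or is dominated by the projector indices. No genuine obstacle is expected beyond this careful case-matching.
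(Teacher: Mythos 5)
Your proof is correct and follows essentially the same route as the paper's: insert $\sum_{h}E_h^*=I$ (or $J=\sum_h A_h$) between the factors and kill the unwanted triple products via Lemma \ref{lem-nonzero} and Proposition \ref{tripro}. The only (harmless) imprecision is in part $(\mathrm{iii})$, where your claim that $p_{ih}^{i}\ne 0$ for all $h\le i$ fails at $h=i$ when $n_i=2$ (there $E_i^*A_iE_i^*=0$); but the identity only requires the terms with $h>i$ to vanish, so the conclusion stands.
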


\begin{proof}[Proof]
$(\rm{i})$ Since $i < j$, Lemma \ref{lem-nonzero}$(\rm{i})$ implies
that
$$
E_i^* A_j = E_i^* A_j (E_0^* + E_1^* + \cdots + E_d^*) =
E_i^* A_j E_j^*
$$
and  $A_j E_i^* = (E_0^* + E_1^* + \cdots + E_d^*) A_j E_i^* = E_j^*
A_j E_i^*$.\quad Also Lemma \ref{lem-nonzero}$(\rm{iv})$ yields that
$$
A_i E_j^* = (E_0^* + E_1^* + \cdots E_d^*) A_i E_j^* =
   E_j^* A_i E_j^*.
$$

$(\rm{ii})$ Since $i < j$, Lemma \ref{lem-nonzero}$(\rm{ii})$ yields
that
$$
E_i^* J E_j^* =  E_i^* (A_0 + A_1 + \cdots + A_d) E_j^*
  = E_i^* A_j E_j^*.
$$

Similarly, $(\rm{iii})$ and $(\rm{iv})$ follow from Lemma
\ref{lem-nonzero}$(\rm{i})$.
%
%
\end{proof}

For the rest of this section, given a fixed $x=x_{01}\in X$, the
rows and columns of any $E_i^*$ and $A_j$, for $i, j \in [d]$, are
indexed by the elements of $X$ in the following order
$$
x_{01}, x_{11}, ..., x_{1 k_1}, x_{21}, ..., x_{2 k_2}, ..., x_{d1},
..., x_{d k_d}
$$
such that $(x, x_{hl}) \in R_h$, $h \in [d]$ and $1 \le l \le k_h$.
We will write any $E_i^*$ and $A_j$, for $i, j \in [d]$, as block
matrices such that
$$
E_i^* = \begin{pmatrix}
0 \cr
 & \ddots \cr
 & & 0 \cr
 & & & I_{k_i} \cr
 & & & & 0 \cr
 & & & & & \ddots \cr
 & & & & & & 0
\end{pmatrix}
\quad \hbox{and} \quad
A_j = \begin{pmatrix}
(A_j)_{00} & (A_j)_{01} &(A_j)_{02} & \cdots & (A_j)_{0d} \cr
(A_j)_{10} & (A_j)_{11} &(A_j)_{12} & \cdots & (A_j)_{1d} \cr
(A_j)_{20} & (A_j)_{21} &(A_j)_{22} & \cdots & (A_j)_{2d} \cr
\vdots     & \vdots     & \vdots    & \ddots & \vdots  \cr
(A_j)_{d0} & (A_j)_{d1} &(A_j)_{d2} & \cdots & (A_j)_{dd}
\end{pmatrix},
$$
where $(A_j)_{rs}$ is a $k_r \times k_s$ matrix, called the $(r,
s)$-{\it block} of $A_j$, for $r, s \in [d]$. We will also write any
matrix in the Terwilliger algebra $\t(x)$ as a block matrix in the
same way. In particular, for any $i, j, h \in [d]$, we write $E_i^*
A_j E_h^*$ as a block matrix. Thus, the $(i, h)$-block of $E_i^* A_j
E_h^*$ is $(A_j)_{ih}$, and any other block of $E_i^* A_j E_h^*$ is
zero. We remark that all the results in this paper except for Lemmas
\ref{lem-aj} and \ref{lem-eij} below are independent of the choice
of the order of elements in $X$.

For any positive integers $p$ and $q$, let $J_{p, q}$ be the $p
\times q$ matrix whose entries are all $1$. The next lemma plays an
important role in our discussion.

\begin{lemma}
\label{lem-aj} For any $j \in [d]$,
$$
A_j = \begin{pmatrix}
 0 & 0 & \cdots & 0 & J_{1, k_j} & 0 & \cdots & 0 \cr
 0 & 0 & \cdots & 0 & J_{k_1, k_j} & 0 & \cdots & 0 \cr
\vdots & \vdots & & \vdots & \vdots & \vdots & & \vdots \cr
 0 & 0 & \cdots & 0 & J_{k_{j-1}, k_j} & 0 & \cdots & 0 \cr
J_{k_j, 1} & J_{k_j, k_1} & \cdots & J_{k_j, k_{j-1}} &
  (A_j)_{jj} & 0 & \cdots & 0 \cr
 0 & 0 & \cdots & 0 & 0 & (A_j)_{j+1, j+1} & \cdots & 0 \cr
\vdots & \vdots & & \vdots & \vdots & \vdots & \ddots & \vdots \cr
 0 & 0 & \cdots & 0 & 0 & 0 & \cdots & (A_j)_{dd}
\end{pmatrix}.
$$
\end{lemma}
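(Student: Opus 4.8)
The plan is to compute $A_j$ block by block, reading off the $(r,s)$-block $(A_j)_{rs}$ as the corresponding block of the triple product $E_r^*A_jE_s^*$ — legitimate since $A_j=\sum_{r,s\in[d]}E_r^*A_jE_s^*$ and $E_r^*A_jE_s^*$ carries $(A_j)_{rs}$ in position $(r,s)$ and zeros elsewhere. So it suffices to decide, for each pair $(r,s)$, whether $E_r^*A_jE_s^*$ vanishes and, when it does not, to identify the block; the diagonal blocks $(A_j)_{jj}$ and $(A_j)_{rr}$ for $r>j$ will be left unspecified, exactly as in the displayed formula.

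For the off-diagonal blocks ($r\ne s$) I would argue as follows. If $r<j$, then $r\ne j$ and $\max\{r,j\}=j$, so Lemma \ref{lem-nonzero}$(\rm{i})$ shows $E_r^*A_jE_s^*\ne0$ only for $s=j$; and for $s=j$, Lemma \ref{lem-ter-basic}$(\rm{ii})$ gives $E_r^*A_jE_j^*=E_r^*JE_j^*$, so $(A_j)_{rj}=J_{k_r,k_j}$. If $r>j$, then $\max\{r,j\}=r$, so Lemma \ref{lem-nonzero}$(\rm{i})$ forces $s=r$, contradicting $r\ne s$; hence every off-diagonal block in the block-rows $r>j$ is zero. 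If $r=j$, then $E_j^*A_jE_s^*=0$ for $s>j$ by Lemma \ref{lem-nonzero}$(\rm{iii})$, while for $s<j$ the symmetry $A_j=A_j^t$ gives $(A_j)_{js}=\big((A_j)_{sj}\big)^t=\big(J_{k_s,k_j}\big)^t=J_{k_j,k_s}$ from the case $r<j$ already treated. This produces precisely the all-ones blocks shown and makes every other off-diagonal block vanish.

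For the diagonal blocks ($r=s$): when $r<j$, Theorem \ref{thm-bose}$(\rm{ii})$ gives $A_rA_j=k_rA_j$, hence $p_{rj}^h=k_r$ if $h=j$ and $0$ otherwise; in particular $p_{rj}^r=0$ since $r\ne j$, so Proposition \ref{tripro} yields $E_r^*A_jE_r^*=0$, i.e. $(A_j)_{rr}=0$. For $r=j$ and for $r>j$ nothing further is needed. Collecting these block identities gives exactly the stated matrix.

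I expect the computation to be routine; the one place requiring care is the block-row $r=j$, where Lemma \ref{lem-nonzero}$(\rm{i})$ is unavailable (its hypothesis excludes $r=j$), so one must instead combine Lemma \ref{lem-nonzero}$(\rm{iii})$ for the columns $s>j$ with the symmetry of $A_j$ for the columns $s<j$. One should also remember to invoke commutativity of the scheme when turning $A_rA_j=k_rA_j$ into the statement $p_{rj}^r=0$.
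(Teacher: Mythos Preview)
Your proposal is correct and follows essentially the same route as the paper's proof: both arguments read off the $(r,s)$-block of $A_j$ from the triple product $E_r^*A_jE_s^*$, use Lemma~\ref{lem-nonzero} to locate the zero blocks, invoke Lemma~\ref{lem-ter-basic}$(\rm{ii})$ to identify the surviving off-diagonal blocks with all-ones matrices, and appeal to the symmetry of $A_j$ for the $(j,s)$-blocks with $s<j$. Your separate treatment of the diagonal block $(A_j)_{rr}$ for $r<j$ via Theorem~\ref{thm-bose}$(\rm{ii})$ and Proposition~\ref{tripro} is correct but redundant, since Lemma~\ref{lem-nonzero}$(\rm{i})$ (which you already invoked for the off-diagonal blocks in row $r<j$) covers all $s$, including $s=r$; and the closing remark about commutativity is unnecessary, as $p_{rj}^r=0$ is read off directly from the expansion $A_rA_j=k_rA_j$ by linear independence of the $A_h$.
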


\begin{proof}[Proof]
Let $i, h \in [d]$. If $i < j$, then $E_i^* A_j E_h^* \ne 0$ if and
only if $h = j$ by Lemma \ref{lem-nonzero}$(\rm{i})$, and hence
$(A_j)_{ih} \ne 0$ if and only if $h = j$. If $i > j$, then $E_i^*
A_j E_h^* \ne 0$ if and only if $h = i$ by Lemma
\ref{lem-nonzero}$(\rm{i})$, and hence $(A_j)_{ih} \ne 0$ if and
only if $h = i$. Moreover, since $E_j^* A_j E_h^* = 0$ for any $h >
j$ by Lemma \ref{lem-nonzero}$(\rm{iii})$, we see that $(A_j)_{jh} =
0$ for any $h > j$. Thus, $A_j$ has the form
$$
A_j = \begin{pmatrix}
 0 & 0 & \cdots & 0 & (A_j)_{0j} & 0 & \cdots & 0 \cr
 0 & 0 & \cdots & 0 & (A_j)_{1j} & 0 & \cdots & 0 \cr
\vdots & \vdots & & \vdots & \vdots & \vdots & & \vdots \cr
 0 & 0 & \cdots & 0 & (A_j)_{j-1,j} & 0 & \cdots & 0 \cr
(A_j)_{j0} & (A_j)_{j1} & \cdots & (A_j)_{j,j-1} &
  (A_j)_{jj} & 0 & \cdots & 0 \cr
 0 & 0 & \cdots & 0 & 0 & (A_j)_{j+1, j+1} & \cdots & 0 \cr
\vdots & \vdots & & \vdots & \vdots & \vdots & \ddots & \vdots \cr
 0 & 0 & \cdots & 0 & 0 & 0 & \cdots & (A_j)_{dd}
\end{pmatrix}.
$$
For any $i < j$, since $E_i^* A_j E_j^* = E_i^* J E_j^*$ by Lemma
\ref{lem-ter-basic}$(\rm{ii})$, we see that $(A_j)_{ij} = J_{k_i,
k_j}$. Also we have $(A_j)_{ji} = J_{k_j, k_i}$ for any $i < j$
since $A_j$ is symmetric. So the lemma holds.
\end{proof}

Motivated by this lemma, we define the following matrices $G_{ij}$,
for all $i, j \in [d]$. Let
$$
G_{ij} := \left \{
\begin{array}{ll}
k_j^{-1} E_i^* A_j E_j^*, & \hbox{if } i < j;  \\
k_j^{-1} E_i^* A_i E_j^*, & \hbox{if } i > j; \hrow \\
k_i^{-1} E_i^* J E_i^*, & \hbox{if } i = j. \hrow
\end{array}
\right.
$$
Clearly, $\{ G_{ij} :\  i, j\in [d] \}$ is a linearly independent
subset of $\t(x)$. Let $\u$ be the $\c$-space with basis $\{ G_{ij}
:\ i, j \in [d] \}$. Thus, the dimension of $\u$ is $(d+1)^2$.

The next lemma is a direct consequence of Lemma \ref{lem-aj}.

\begin{lemma}
\label{lem-eij} For any $i, j \in [d]$, the $(i, j)$-block of
$G_{ij}$ is $k_j^{-1} J_{k_i, k_j}$, and any other block of $G_{ij}$
is zero.
\end{lemma}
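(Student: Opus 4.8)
The plan is to read each block of $G_{ij}$ directly off Lemma \ref{lem-aj}, splitting into the three cases $i < j$, $i > j$, and $i = j$ that appear in the definition of $G_{ij}$. The key observation, already recorded in the paragraph preceding Lemma \ref{lem-aj}, is that for any $i, j, h \in [d]$ the block matrix $E_i^* A_j E_h^*$ has its $(i, h)$-block equal to $(A_j)_{ih}$ and every other block equal to zero; in particular each such triple product has at most one nonzero block. Consequently, in every case $G_{ij}$ has exactly one potentially nonzero block, namely the $(i, j)$-block, and it remains only to identify that block.

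If $i < j$, then $G_{ij} = k_j^{-1} E_i^* A_j E_j^*$, so its only nonzero block is the $(i, j)$-block $k_j^{-1}(A_j)_{ij}$; and since $i < j$, Lemma \ref{lem-aj} gives $(A_j)_{ij} = J_{k_i, k_j}$. If $i > j$, then $G_{ij} = k_j^{-1} E_i^* A_i E_j^*$, so its only nonzero block is the $(i, j)$-block $k_j^{-1}(A_i)_{ij}$; applying Lemma \ref{lem-aj} now to the matrix $A_i$, the entry lying in block-row $i$ (the distinguished row of that display) and block-column $j < i$ is $J_{k_i, k_j}$, so $(A_i)_{ij} = J_{k_i, k_j}$. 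In both cases the $(i, j)$-block of $G_{ij}$ equals $k_j^{-1} J_{k_i, k_j}$, and all other blocks vanish.

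Finally, if $i = j$, then $G_{ii} = k_i^{-1} E_i^* J E_i^*$, whose only nonzero block is the $(i, i)$-block, equal to $k_i^{-1}$ times the $(i, i)$-block of $J = J_v$; since every block of $J_v$ is an all-ones matrix, this is $k_i^{-1} J_{k_i, k_i}$, which matches $k_j^{-1} J_{k_i, k_j}$ because $j = i$. I do not anticipate a genuine obstacle in this argument; the only point deserving a moment's care is the bookkeeping in the case $i > j$, where one must remember that $G_{ij}$ is defined using $A_i$ rather than $A_j$ and then read Lemma \ref{lem-aj} off the subdiagonal block-row of $A_i$ instead of off a superdiagonal block-column.
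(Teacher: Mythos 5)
Your proof is correct and matches the paper's intent exactly: the paper states this lemma without proof, calling it "a direct consequence of Lemma \ref{lem-aj}," and your case analysis (reading the single nonzero block of each triple product off the block form of $A_j$, resp.\ $A_i$, resp.\ $J$) is precisely the verification that remark leaves to the reader. The one point you flag — that for $i>j$ one must read the subdiagonal block-row of $A_i$ rather than a column of $A_j$ — is handled correctly.
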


Theorem \ref{thm-ter}$(\rm{i})$ follows from the next lemma.

\begin{lemma}
\label{lem-eijekl} For any $i, j, g, h \in [d]$,
\begin{equation}
\label{eq-eijekl} G_{ij} G_{gh} = \delta_{jg} G_{ih},
\end{equation}
where $\delta_{jg}$ is the Kronecker delta.
\end{lemma}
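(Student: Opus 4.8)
The plan is to prove \eqref{eq-eijekl} by a direct block-matrix computation, using only the description of the $G_{ij}$ furnished by Lemma \ref{lem-eij}. With respect to the fixed ordering of $X$, the matrix $G_{ij}$ has a single nonzero block, namely $k_j^{-1}J_{k_i,k_j}$, sitting in position $(i,j)$, and every other block is zero; likewise the only nonzero block of $G_{gh}$ is $k_h^{-1}J_{k_g,k_h}$ in position $(g,h)$. Multiplying block matrices, the $(r,s)$-block of $G_{ij}G_{gh}$ equals $\sum_{t\in[d]}(G_{ij})_{rt}(G_{gh})_{ts}$, and the only term in this sum that has any chance of being nonzero requires simultaneously $(r,t)=(i,j)$ and $(t,s)=(g,h)$; in particular it forces $j=t=g$.

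First I would dispose of the case $j\ne g$: by the previous remark every block of $G_{ij}G_{gh}$ is then zero, so $G_{ij}G_{gh}=0=\delta_{jg}G_{ih}$. In the remaining case $j=g$, the product $G_{ij}G_{gh}$ has exactly one possibly-nonzero block, located in position $(i,h)$, and it equals
\[
(G_{ij})_{ij}(G_{gh})_{gh}=\bigl(k_j^{-1}J_{k_i,k_j}\bigr)\bigl(k_h^{-1}J_{k_g,k_h}\bigr).
\]
Since $g=j$ we have $k_g=k_j$, so the dimensions match and the elementary identity $J_{p,q}J_{q,r}=q\,J_{p,r}$ for all-ones matrices gives $J_{k_i,k_j}J_{k_j,k_h}=k_j J_{k_i,k_h}$. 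Hence the $(i,h)$-block of $G_{ij}G_{gh}$ is $k_j^{-1}k_h^{-1}k_j J_{k_i,k_h}=k_h^{-1}J_{k_i,k_h}$, which by Lemma \ref{lem-eij} is precisely the $(i,h)$-block of $G_{ih}$; since all other blocks of both $G_{ij}G_{gh}$ and $G_{ih}$ vanish, we conclude $G_{ij}G_{gh}=G_{ih}=\delta_{jg}G_{ih}$.

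I do not expect any real obstacle here: once Lemma \ref{lem-eij} is available the whole argument is bookkeeping about a single nonzero block, and the only nontrivial ingredient is the multiplication rule $J_{p,q}J_{q,r}=q\,J_{p,r}$. The one place to stay alert is to verify that $k_g=k_j$ when $g=j$ so that the block sizes are compatible and the scalar $k_j$ is produced correctly, but this is automatic. As an immediate consequence, the $(d+1)^2$ elements $G_{ij}$ multiply exactly like the standard matrix units of $M_{d+1}(\c)$, so the linear isomorphism sending $G_{ij}$ to the $(i,j)$ matrix unit identifies $\u$ with $M_{d+1}(\c)$, which yields Theorem \ref{thm-ter}$(\rm{i})$.
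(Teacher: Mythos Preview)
Your proof is correct and follows essentially the same approach as the paper: both dispose of the case $j\ne g$ by observing that the single nonzero blocks cannot align, and then in the case $j=g$ compute the unique nonzero block of the product as $k_j^{-1}J_{k_i,k_j}\cdot k_h^{-1}J_{k_j,k_h}=k_h^{-1}J_{k_i,k_h}$ and invoke Lemma~\ref{lem-eij} to identify this with $G_{ih}$. Your write-up is slightly more detailed about the block-multiplication bookkeeping, but the argument is the same.
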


\begin{proof}[Proof]
 Clearly
$G_{ij} G_{gh} = 0$ if $j \ne g$. So we only need to show that
$$
G_{ij} G_{jh} = G_{ih}.
$$
Since the $(i, j)$-block is the only nonzero block of $G_{ij}$, and
the $(j, h)$-block is the only nonzero block of $G_{jh}$, we see
that every block
 except for the $(i, h)$-block of $G_{ij} G_{jh}$ is zero.
By Lemma \ref{lem-eij},  the $(i, h)$-block of $G_{ij} G_{jh}$ is
$$
k_j^{-1} J_{k_i, k_j} \cdot k_h^{-1} J_{k_j, k_h} = k_h^{-1} J_{k_i,
k_h}.
$$
Thus, $G_{ij} G_{jh} = G_{ih}$ by Lemma \ref{lem-eij}.
\end{proof}

\begin{lemma}\label{lem-ter-basic-2} For $h, i, j\in [d]$, the
following hold.

 $(\rm{i})$ If $h < i$, then $A_h G_{ij} = E_i^* A_h E_i^*
G_{ij}$.

$(\rm{ii})$ If $h < j$, then $G_{ij} A_h = G_{ij} E_j^* A_h E_j^*$.
\end{lemma}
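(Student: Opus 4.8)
The plan is to prove part $(\rm{i})$ directly by analyzing the block structure; part $(\rm{ii})$ then follows by a symmetry/transpose argument, since every $A_h$ is symmetric and $(G_{ij})^t = k_j^{-1} k_i J_{k_j,k_i}$ in the $(j,i)$-block (up to a scalar, $(G_{ij})^t$ is $G_{ji}$), so that applying part $(\rm{i})$ to the transposed identity and transposing back yields part $(\rm{ii})$. Concretely, if $h<j$ then $(G_{ij}A_h)^t = A_h^t (G_{ij})^t = A_h (G_{ij})^t$, and I would express $(G_{ij})^t$ in terms of $G_{ji}$, apply part $(\rm{i})$ (legitimate since $h<j$), and transpose the resulting identity; the block $E_j^* A_h E_j^*$ appears because $(E_j^* A_h E_j^*)^t = E_j^* A_h E_j^*$. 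So the real content is part $(\rm{i})$.

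For part $(\rm{i})$, suppose $h<i$. I would use the block description of $A_h$ from Lemma \ref{lem-aj} together with the fact (Lemma \ref{lem-eij}) that $G_{ij}$ has a single nonzero block, the $(i,j)$-block $k_j^{-1} J_{k_i,k_j}$. The product $A_h G_{ij}$ in block form picks out the $i$th block-column of $A_h$ times the $(i,j)$-entry of $G_{ij}$: explicitly, the $(r,j)$-block of $A_h G_{ij}$ is $(A_h)_{ri}\cdot k_j^{-1} J_{k_i,k_j}$, and all other blocks vanish. Now consult the form of $A_h$: since $h<i$, the $i$th block-column of $A_h$ has its only possibly-nonzero entry in row $i$ itself, namely $(A_h)_{ii}$ — every off-diagonal block $(A_h)_{ri}$ with $r\neq i$ in that column is zero (the nonzero off-diagonal blocks of $A_h$ sit in block-row $h$ and block-column $h$, and in diagonal positions $(A_h)_{ss}$ for $s\geq h$; column $i>h$ contains only the diagonal block $(A_h)_{ii}$). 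Hence $A_h G_{ij}$ has a single nonzero block, the $(i,j)$-block, equal to $(A_h)_{ii}\cdot k_j^{-1} J_{k_i,k_j}$. On the other hand, $E_i^* A_h E_i^*$ has a single nonzero block, the $(i,i)$-block $(A_h)_{ii}$, so $E_i^* A_h E_i^* G_{ij}$ also has a single nonzero block, the $(i,j)$-block, equal to $(A_h)_{ii}\cdot k_j^{-1} J_{k_i,k_j}$. The two agree, proving $(\rm{i})$.

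The main obstacle — really the only place one must be careful — is pinning down exactly which blocks of $A_h$ are nonzero and confirming that in block-column $i$ (with $i>h$) only the diagonal block survives. This is precisely what Lemma \ref{lem-aj} gives, so it reduces to reading off that lemma correctly; there is no hard estimate or inductive argument. A minor secondary point is the bookkeeping in the transpose argument for $(\rm{ii})$: one should record carefully that $(G_{ij})^t = (k_i/k_j) G_{ji}$ when $i\neq j$ and $(G_{ii})^t = G_{ii}$, so that scalars cancel and the stated identity $G_{ij}A_h = G_{ij} E_j^* A_h E_j^*$ comes out with the correct normalization. Alternatively, $(\rm{ii})$ can be proved by an entirely parallel direct block computation, using that the $j$th block-row of $A_h$ (for $h<j$) has only the diagonal block $(A_h)_{jj}$ nonzero; I would likely present it that way to keep the proof self-contained and symmetric with $(\rm{i})$.
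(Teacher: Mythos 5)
Your proposal is correct, but it takes a different (more computational) route than the paper. The paper's proof of part (i) is a two-line argument that avoids block matrices entirely: since $h<i$, Lemma \ref{lem-ter-basic}(i) gives $A_h E_i^* = E_i^* A_h E_i^*$, and since $G_{ij}=E_i^*G_{ij}$ (every $G_{ij}$ begins with the idempotent $E_i^*$), one gets $A_hG_{ij}=A_hE_i^*G_{ij}=E_i^*A_hE_i^*G_{ij}$; part (ii) is the mirror image using $E_j^*A_hE_j^*=A_hE_j^*$ (equivalently its transpose) and $G_{ij}=G_{ij}E_j^*$. You instead verify the identity block by block via Lemmas \ref{lem-aj} and \ref{lem-eij}, which is valid: your reading of Lemma \ref{lem-aj} is accurate (for $i>h$ the $i$th block-column of $A_h$ contains only the diagonal block $(A_h)_{ii}$), and your transpose bookkeeping $(G_{ij})^t=(k_i/k_j)G_{ji}$ is right, so the scalars cancel as you claim. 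What the paper's approach buys is brevity and independence from the chosen ordering of $X$ (the authors explicitly flag that only Lemmas \ref{lem-aj} and \ref{lem-eij} depend on that ordering); what your approach buys is that it makes the single surviving block of $A_hG_{ij}$ completely explicit, which is in the spirit of how the paper later computes $A_hG_{ij}$ in Lemma \ref{lem-aheij}. Either version is acceptable; if you keep yours, I would suggest presenting (ii) by the parallel direct computation rather than the transpose argument, as you yourself note, to avoid the extra normalization bookkeeping.
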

\begin{proof}
$(\rm{i})$ Since $h < i$, $A_h E_i^* = E_i^* A_h E_i^*$ by Lemma
\ref{lem-ter-basic}$(\rm{i})$. Thus,
$$
A_h G_{ij} = A_h E_i^* G_{ij} = E_i^* A_h E_i^* G_{ij}.
$$

The proof of $(\rm{ii})$ is similar.
\end{proof}

The first half of Theorem \ref{thm-ter}$(\rm{ii})$ will follow from
the next lemma.

\begin{lemma}
\label{lem-aheij} For any $i, j, h \in [d]$, the following hold.

$(\rm{i})$
$$
A_h G_{ij} = \left \{
\begin{array}{ll}
k_h G_{ij}, & \hbox{if } h < i; \\
k_i \Big (\sum\limits_{r=0}^{i-1} G_{rj} \Big) + \Big (k_i -
\sum\limits_{r=0}^{i-1} k_r \Big) G_{ij}, & \hbox{if } h = i;
             \hrow \\
k_i G_{hj}, & \hbox{if } h > i. \hrow
\end{array}
\right.
$$

$(\rm{ii})$
$$
 G_{ij} A_h = \left \{
\begin{array}{ll}
k_h G_{ij}, & \hbox{if } h < j; \\
\sum\limits_{r=0}^{j-1} (k_r G_{ir}) + \Big (k_j -
\sum\limits_{r=0}^{j-1} k_r \Big) G_{ij}, & \hbox{if } h = j;
             \hrow \\
k_h G_{ih}, & \hbox{if } h > j. \hrow
\end{array}
\right.
$$
\end{lemma}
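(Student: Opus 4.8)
The plan is to compute everything through the block-matrix descriptions furnished by Lemmas \ref{lem-aj} and \ref{lem-eij}. By Lemma \ref{lem-eij} the matrix $G_{ij}$ has exactly one nonzero block, its $(i,j)$-block $k_j^{-1}J_{k_i,k_j}$. Consequently, for any $h\in[d]$, the product $A_hG_{ij}$ is supported on block-column $j$ alone, with $(r,j)$-block $(A_h)_{ri}\,k_j^{-1}J_{k_i,k_j}$; dually, $G_{ij}A_h$ is supported on block-row $i$ alone, with $(i,s)$-block $k_j^{-1}J_{k_i,k_j}\,(A_h)_{js}$. So the whole lemma reduces to knowing block-column $i$ of $A_h$ for part $(\mathrm{i})$ and block-row $j$ of $A_h$ for part $(\mathrm{ii})$, both of which are read off from Lemma \ref{lem-aj}.

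Before the case analysis I would record two facts about the blocks of $A_h$, each an immediate consequence of Lemma \ref{lem-aj} together with the fact that $A_h$, being the adjacency matrix of the $k_h$-regular symmetric relation $R_h$, has every row sum and every column sum equal to $k_h$. First, for $r>h$ the $r$-th block row (and column) of $A_h$ has its only nonzero block on the diagonal, so $(A_h)_{rr}J_{k_r,q}=k_hJ_{k_r,q}$ and $J_{q,k_r}(A_h)_{rr}=k_hJ_{q,k_r}$. Second, the $h$-th block row of $A_h$ is $\bigl(J_{k_h,k_0},\dots,J_{k_h,k_{h-1}},(A_h)_{hh},0,\dots,0\bigr)$, so comparing row sums gives $(A_h)_{hh}J_{k_h,q}=\bigl(k_h-\sum_{r=0}^{h-1}k_r\bigr)J_{k_h,q}$, and similarly $J_{q,k_h}(A_h)_{hh}=\bigl(k_h-\sum_{r=0}^{h-1}k_r\bigr)J_{q,k_h}$.

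With these in place, part $(\mathrm{i})$ is three short evaluations. If $h<i$, the only nonzero block of $A_h$ in block-column $i$ is $(A_h)_{ii}$, and $(A_h)_{ii}k_j^{-1}J_{k_i,k_j}=k_hk_j^{-1}J_{k_i,k_j}$, which by Lemma \ref{lem-eij} equals $k_hG_{ij}$. If $h=i$, the nonzero blocks of $A_i$ in block-column $i$ are the all-ones blocks $(A_i)_{ri}=J_{k_r,k_i}$ for $r<i$ and the diagonal block $(A_i)_{ii}$; using $J_{k_r,k_i}J_{k_i,k_j}=k_iJ_{k_r,k_j}$ the former contribute $k_i\sum_{r=0}^{i-1}G_{rj}$, while the latter contributes $\bigl(k_i-\sum_{r=0}^{i-1}k_r\bigr)G_{ij}$, giving the stated formula. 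If $h>i$, the only nonzero block of $A_h$ in block-column $i$ is $(A_h)_{hi}=J_{k_h,k_i}$ (part of the $h$-th block row), and it yields $k_ik_j^{-1}J_{k_h,k_j}=k_iG_{hj}$. Part $(\mathrm{ii})$ is the mirror image: one repeats the argument with block-row $j$ of $A_h$ replacing block-column $i$ (alternatively, transpose part $(\mathrm{i})$, noting $G_{ij}^{t}=(k_i/k_j)G_{ji}$ and $A_h^{t}=A_h$).

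The only delicate point is the $h=i$ case of $(\mathrm{i})$ (and the $h=j$ case of $(\mathrm{ii})$), where one must keep separate the contributions of the off-diagonal all-ones blocks and of the diagonal block $(A_h)_{hh}$; the asymmetric coefficient $k_i-\sum_{r=0}^{i-1}k_r$ is exactly what the regularity bookkeeping for that diagonal block produces, so nothing beyond Lemma \ref{lem-aj} and the valency of $R_h$ is needed. Everything else is the elementary identity $J_{p,m}J_{m,q}=mJ_{p,q}$.
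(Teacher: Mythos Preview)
Your proof is correct. The paper's argument agrees with yours verbatim in the case $h<i$ (computing the $(i,j)$-block of $E_i^*A_hE_i^*\,G_{ij}$ via Lemma~\ref{lem-aj}), but for $h=i$ and $h>i$ it proceeds differently: instead of reading off block-column $i$ of $A_h$ directly, it rewrites $A_hE_i^*$ using Lemma~\ref{lem-ter-basic}(i),(iv), expresses the resulting pieces $E_r^*A_iE_i^*$ as scalar multiples of $G_{ri}$, and then applies the matrix-unit relations $G_{ri}G_{ij}=G_{rj}$ from Lemma~\ref{lem-eijekl}; the coefficient $k_i-\sum_{r<i}k_r$ emerges from the identity $E_i^*A_iE_i^*=E_i^*JE_i^*-\sum_{r<i}E_i^*A_rE_i^*$ together with the already-established $h<i$ case. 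Your route is more uniform and self-contained---it never invokes Lemmas~\ref{lem-ter-basic-2} or~\ref{lem-eijekl}, only the block form of $A_h$ and the valency bookkeeping---while the paper's route exploits the algebra structure of $\mathcal{U}$ that it has just set up. Both are short; yours is the more elementary of the two.
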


\begin{proof}[Proof]
$(\rm{i})$ If $h < i$, then $A_h G_{ij} = E_i^* A_h E_i^* G_{ij}$ by
Lemma \ref{lem-ter-basic-2}$(\rm{i})$. From
 Lemma \ref{lem-aj}, the $(i, i)$-block of $E_i^* A_h E_i^*$
is $(A_h)_{ii}$, and any other block of $E_i^* A_h E_i^*$ is zero.
Note that the sum of entries in any row of $(A_h)_{ii}$ is $k_h$ by
Lemma \ref{lem-aj}. So Lemma \ref{lem-eij} yields that the $(i,
j)$-block of $(E_i^* A_h E_i^*) G_{ij}$ is
$$
(A_h)_{ii} \cdot k_j^{-1} J_{k_i, k_j} = k_h k_j^{-1} J_{k_i, k_j},
$$
and any other block of $(E_i^* A_h E_i^*) G_{ij}$ is zero. Thus,
$(E_i^* A_h E_i^*) G_{ij} = k_h G_{ij}$ by Lemma \ref{lem-eij}, and
$(\rm{i})$ holds for $h < i$.

 If $h = i$, then by Lemma \ref{lem-ter-basic}$(\rm{iv})$,
$$
A_h G_{ij} = A_i G_{ij} = A_i E_i^* G_{ij}
   = \sum_{r=0}^i E_r^* A_i E_i^* G_{ij}
  = \sum_{r=0}^{i-1} E_r^* A_i E_i^* G_{ij} +
        E_i^* A_i E_i^* G_{ij}.
$$
By Lemma \ref{lem-eijekl},
$$
\sum_{r=0}^{i-1} E_r^* A_i E_i^* G_{ij} =
 \sum_{r=0}^{i-1} k_i G_{ri} G_{ij} =
k_i \sum_{r=0}^{i-1} G_{rj}.
$$
From Lemmas \ref{lem-ter-basic}$(\rm{iii})$,
\ref{lem-ter-basic-2}$(\rm{i})$, \ref{lem-eijekl}, and what we have
just proved,
$$
E_i^* A_i E_i^* G_{ij} = \Big(E_i^* J E_i^* -
  \sum_{r=0}^{i-1} E_i^* A_r E_i^* \Big) G_{ij} =
k_i G_{ii} G_{ij} - \sum_{r=0}^{i-1} A_r G_{ij} = \Big(k_i -
\sum_{r=0}^{i-1} k_r \Big) G_{ij}.
$$
Thus,
$$
A_i G_{ij} = k_i \sum_{r=0}^{i-1} G_{rj} +
    \Big(k_i - \sum_{r=0}^{i-1} k_r \Big) G_{ij}.
$$
So $(\rm{i})$ holds for $h = i$.

If $h > i$, then Lemmas \ref{lem-ter-basic}$(\rm{i})$ and
\ref{lem-eijekl} imply that
$$
A_h G_{ij} = A_h E_i^* G_{ij} = E_h^* A_h E_i^* G_{ij} = k_i G_{hi}
G_{ij} = k_i G_{hj}.
$$
So $(\rm{i})$ holds for $h > i$.

The proof of $(\rm{ii})$ is similar.
\end{proof}

The next lemma is needed for the proof of the second half of Theorem
\ref{thm-ter}$(\rm{ii})$ and the proof of Theorem \ref{thm-ter-1}.

\begin{lemma}
\label{lem-eiahei} For any $i, g, h \in [d]$ such that $g, h \in
[i]$ and at least one of $g$ and $h$ is not equal to $i$, we have
that
\begin{equation}
\label{eq-comm} (E_i^* A_g E_i^*) (E_i^* A_h E_i^*) = E_i^* A_g A_h
E_i^*.
\end{equation}
\end{lemma}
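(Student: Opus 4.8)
The plan is to reduce (\ref{eq-comm}) to the vanishing of certain triple products and then quote Lemma~\ref{lem-nonzero}. First I would use $E_i^*E_i^* = E_i^*$ to rewrite $(E_i^*A_gE_i^*)(E_i^*A_hE_i^*) = E_i^*A_gE_i^*A_hE_i^*$, and the resolution of the identity $\sum_{r\in[d]}E_r^* = I$ to expand $A_h E_i^* = (I-E_i^*)A_hE_i^* + E_i^*A_hE_i^*$ inside $E_i^*A_gA_hE_i^*$, which gives
\[
E_i^* A_g A_h E_i^* - (E_i^* A_g E_i^*)(E_i^* A_h E_i^*) = E_i^*A_g(I-E_i^*)A_hE_i^* = \sum_{r\in[d],\, r\ne i} E_i^* A_g E_r^* A_h E_i^* .
\]
So it suffices to show that each summand on the right is zero; I would fix $r\in[d]$ with $r\ne i$.

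Then I would split into the two cases allowed by $g\in[i]$ (i.e.\ $g\le i$). If $g<i$, then $g\ne i$, so by Lemma~\ref{lem-nonzero}$(\rm{i})$ the factor $E_i^*A_gE_r^*$ is nonzero only when $r=\max\{i,g\}=i$, contradicting $r\ne i$; hence that factor, and the whole summand, vanishes. If $g=i$, then the hypothesis that $g$ and $h$ are not both equal to $i$ forces $h\ne i$, so $h<i$ (since $h\in[i]$); then by Lemma~\ref{lem-nonzero}$(\rm{iv})$ the factor $E_r^*A_hE_i^*$ is nonzero only when $r=i$, again a contradiction, so the summand vanishes. In both cases the right-hand sum is $0$, which yields (\ref{eq-comm}).

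I do not anticipate a genuine obstacle: Lemma~\ref{lem-nonzero} (equivalently Proposition~\ref{tripro}) already encodes all the combinatorial content, and the only point requiring care is matching each index inequality to the correct part of that lemma so that it is applied to the correct factor. As a side remark, since $A_gA_h=A_hA_g$ the same computation with $g$ and $h$ interchanged shows that $E_i^*A_gE_i^*$ and $E_i^*A_hE_i^*$ commute whenever the hypothesis holds, which is ultimately why the quotient algebra in Theorem~\ref{thm-ter}$(\rm{ii})$ will turn out to be commutative.
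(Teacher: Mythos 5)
Your proof is correct and is essentially the paper's argument in different notation: the paper computes the $(i,i)$-block of $E_i^* A_g A_h E_i^*$ using the block form of $A_j$ from Lemma \ref{lem-aj}, which is exactly your insertion of $\sum_{r\in[d]} E_r^* = I$ together with the vanishing of $E_i^* A_g E_r^* A_h E_i^*$ for $r \ne i$ via Lemma \ref{lem-nonzero}. Your case split ($g<i$ handled through $E_i^*A_gE_r^*$, and $g=i$, $h<i$ handled through $E_r^*A_hE_i^*$) covers all cases permitted by the hypothesis, so nothing is missing.
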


\begin{proof}[Proof]
For each $i \in [d]$ and every $g,h \in [i]$, Lemma \ref{lem-aj}
implies that the $(i, i)$-block of $(E_i^* A_g E_i^*) (E_i^* A_h
E_i^*)$ is $(A_g)_{ii} (A_h)_{ii}$. If at least one of $g$ and $h$
is not equal to $i$, then by Lemma \ref{lem-aj}, the $(i, i)$-block
of $E_i^* A_g A_h E_i^*$ is also $(A_g)_{ii} (A_h)_{ii}$. Thus,
$(E_i^* A_g E_i^*) (E_i^* A_h E_i^*) = E_i^* A_g A_h E_i^*$.
\end{proof}

Now we are ready to prove Theorem \ref{thm-ter} and Corollary
\ref{cor-thm-ter}.

\begin{proof}[Proof of Theorem \ref{thm-ter}]
$(\rm{i})$ For any $i, j, h \in [d]$, $A_h G_{ij} \in \u$ and
$G_{ij} A_h \in \u$ by Lemma \ref{lem-aheij}, and $E_h^* G_{ij} =
\delta_{hi} G_{ij} \in \u$, $G_{ij} E_h^* = \delta_{jh} G_{ij} \in
\u$. So $\u$ is an ideal of $\t(x)$.
For any
$i, j \in [d]$, let $E_{ij}$ be the $(d+1) \times (d+1)$ matrix
whose $(i, j)$-entry is $1$ and whose other entries are all zero.
Then the linear map $\varphi: \u \to M_{d+1}(\c)$ defined by
$$
\varphi(G_{ij}) = E_{ij}, \ \  i, j \in [d],
$$
establishes an isomorphism by (\ref{eq-eijekl}). Thus, $\u$ is
isomorphic to $M_{d+1}(\c)$.

$(\rm{ii})$ Recall that $\x = K_{n_1} \wr K_{n_2} \wr \cdots
\wr~K_{n_d}$ is triply-regular, and hence $\t(x) = \t_0(x)$. Thus,
$$
\t(x)/\u = \{ E_i^* A_h E_i^* + \u :\ i\in [d], \
  \ h \in [i] \}.
$$
For any $i \in [d]$ and any $g,h\in [i]$, Lemma \ref{lem-eiahei}
implies that
$$
(E_i^* A_g E_i^*) (E_i^* A_h E_i^*) = E_i^* A_g A_h E_i^* = E_i^*
A_h A_g E_i^* = (E_i^* A_h E_i^*) (E_i^* A_g E_i^*), \quad \hbox{ if
} g \ne h.
$$
Thus, $\t(x)/\u$ is commutative.
\end{proof}

\begin{proof}[Proof of Corollary \ref{cor-thm-ter}]
Since $\t(x)$ is semisimple, and $\u$ is an ideal of $\t(x)$ by
Theorem \ref{thm-ter}$(\rm{ii})$, we see that $\t(x) = \u \oplus \i$
for some ideal $\i$ of $\t(x)$. Note that $\i$ is also semisimple by
Wedderburn-Artin's Theorem, and $\i \cong \t(x)/\u$ is a commutative
algebra by Theorem \ref{thm-ter}$(\rm{ii})$. Recall that the
dimension of $\t(x)$ is
$$
(d+1)^2 + \frac{d(d+1)}{2} - b,
$$
where $b = |\{i\in\{1, 2, \dots, d\} :\ n_i = 2\}|$, and the
dimension of $\u$ is $(d+1)^2$. So the dimension of $\i$ is
$d(d+1)/2 - b$, and
$$
\i \cong M_1(\c)^{\oplus \frac{d(d+1)}{2} - b}.
$$
This completes the proof.
\end{proof}

In order to prove Theorem \ref{thm-ter-1}, we first prove the
following two lemmas.

\begin{lemma}
\label{lem-idemp} For any $i\in \{1, 2, \dots, d\}$ and any $h \in
[i-1]$, we have that

\begin{equation}
\label{eq-idemp}
\left (\sum_{j=0}^h E_i^* A_j E_i^* \right )^2 =
 \Big (\sum_{j=0}^h k_j \Big )
\Big (\sum_{j=0}^h E_i^* A_j E_i^* \Big ).
\end{equation}
\end{lemma}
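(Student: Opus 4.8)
The plan is to reduce the claimed identity to the behavior of matrices on the single block where they are supported. Fix $i \in \{1,2,\dots,d\}$ and $h \in [i-1]$, so in particular $h < i$. By Lemma \ref{lem-aj}, for each $j \in [h] \subseteq [i-1]$ the matrix $E_i^* A_j E_i^*$ has only one nonzero block, namely its $(i,i)$-block $(A_j)_{ii}$. Hence $\sum_{j=0}^h E_i^* A_j E_i^*$ is the matrix all of whose blocks vanish except the $(i,i)$-block, which equals $\sum_{j=0}^h (A_j)_{ii}$. Squaring such a ``single-block'' matrix just squares that block, so the left-hand side of (\ref{eq-idemp}) is the matrix whose $(i,i)$-block is $\big(\sum_{j=0}^h (A_j)_{ii}\big)^2$ and whose other blocks are zero. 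Thus it suffices to prove the block identity
\[
\Big(\sum_{j=0}^h (A_j)_{ii}\Big)^2 = \Big(\sum_{j=0}^h k_j\Big)\Big(\sum_{j=0}^h (A_j)_{ii}\Big).
\]

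First I would establish that $\sum_{j=0}^h E_i^* A_j E_i^* = E_i^* \big(\sum_{j=0}^h A_j\big) E_i^*$, which is immediate, and then invoke Lemma \ref{lem-eiahei}: since every $j \in [h]$ satisfies $j \le h < i$, both indices lie in $[i]$ and at least one is not equal to $i$, so for all $g, j \in [h]$ we get $(E_i^* A_g E_i^*)(E_i^* A_j E_i^*) = E_i^* A_g A_j E_i^*$. Expanding the square and summing,
\[
\Big(\sum_{j=0}^h E_i^* A_j E_i^*\Big)^2 = \sum_{g=0}^h \sum_{j=0}^h E_i^* A_g E_i^* E_i^* A_j E_i^* = E_i^* \Big(\sum_{g=0}^h A_g\Big)\Big(\sum_{j=0}^h A_j\Big) E_i^* = E_i^* \Big(\sum_{j=0}^h A_j\Big)^{2} E_i^*.
\]
Now Proposition \ref{prop-bose}$(\rm{i})$ applies directly with this value of $h$: $\big(\sum_{j=0}^h A_j\big)^2 = \big(\sum_{j=0}^h k_j\big)\big(\sum_{j=0}^h A_j\big)$. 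Substituting and pulling the scalar out yields $\big(\sum_{j=0}^h E_i^* A_j E_i^*\big)^2 = \big(\sum_{j=0}^h k_j\big) E_i^* \big(\sum_{j=0}^h A_j\big) E_i^* = \big(\sum_{j=0}^h k_j\big)\big(\sum_{j=0}^h E_i^* A_j E_i^*\big)$, which is (\ref{eq-idemp}).

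The only delicate point is checking that the hypotheses of Lemma \ref{lem-eiahei} genuinely cover every product term: the restriction there is that $g, h \in [i]$ and not both equal $i$, and since $h \le i-1$ forces every summation index to be strictly below $i$, this is automatic — there is no ``diagonal'' term $E_i^* A_i E_i^*$ involved, which is precisely why the cross-terms behave. So there is no real obstacle; the proof is a two-line combination of Lemma \ref{lem-eiahei} and Proposition \ref{prop-bose}$(\rm{i})$, and one could equally well argue purely on the $(i,i)$-block using Lemma \ref{lem-aj} together with the fact that $\sum_{j=0}^h (A_j)_{ii}$ is, up to the scalar $\sum_{j=0}^h k_j$, an idempotent (it is $\sum_{j=0}^h k_j$ times the all-ones-normalized projection), but routing through the already-proved Proposition \ref{prop-bose} is cleanest.
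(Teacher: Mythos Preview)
Your proof is correct and follows essentially the same route as the paper: expand the square, apply Lemma~\ref{lem-eiahei} termwise (which is legitimate since every index is strictly below $i$), collapse to $E_i^*\big(\sum_{j=0}^h A_j\big)^2 E_i^*$, and finish with Proposition~\ref{prop-bose}$(\rm{i})$. The block-level discussion via Lemma~\ref{lem-aj} is extra commentary but not needed for the argument.
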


\begin{proof}[Proof]
For any $i\in\{1, 2, \dots, d\}$ and any $h \in [i-1]$, by Lemma
\ref{lem-eiahei} we have that
\begin{eqnarray*}
\left (\sum_{j=0}^h E_i^* A_j E_i^* \right )^2  & = &
  \sum_{l=0}^h \sum_{j=0}^h (E_i^* A_j E_i^*)(E_i^* A_l E_i^*) \\
 & = & \sum_{l=0}^h \sum_{j=0}^h E_i^* A_j A_l E_i^* \\
 & = & E_i^* (A_0 + A_1 + \cdots + A_h)^2 E_i^*.
\end{eqnarray*}
So (\ref{eq-idemp}) holds by Proposition \ref{prop-bose}$(\rm{i})$.

\end{proof}

\begin{lemma}
\label{lem-ak} For any $i, h, g \in [d]$ such that $i \ge 1$ and
$h\in [i-1]$, the following hold.

$(\rm{i})$
$$
A_g \Big ( \sum_{j=0}^h E_i^* A_j E_i^* \Big ) = \left \{
\begin{array}{ll}
k_g \sum_{j=0}^h E_i^* A_j E_i^*, & \hbox{if } g \le h; \\
\Big ( \sum_{j=0}^h k_j \Big ) E_i^* A_g E_i^*;
     & \hbox{if } h < g < i; \hrow \\
\Big ( \sum_{j=0}^h k_j \Big ) A_i E_i^*;
     & \hbox{if } g = i; \hrow \\
\Big ( \sum_{j=0}^h k_j \Big ) E_g^* A_g E_i^*;
 & \hbox{if } g > i. \hrow
\end{array}
\right.
$$

$(\rm{ii})$
$$
 \Big ( \sum_{j=0}^h E_i^* A_j E_i^* \Big ) A_g =
\left \{
\begin{array}{ll}
k_k \sum_{j=0}^h E_i^* A_j E_i^*, & \hbox{if } g \le h; \\
\Big ( \sum_{j=0}^h k_j \Big ) E_i^* A_g E_i^*;
     & \hbox{if } h < g < i; \hrow \\
\Big ( \sum_{j=0}^h k_j \Big ) E_i^* A_i;
     & \hbox{if } g = i; \hrow \\
\Big ( \sum_{j=0}^h k_j \Big ) E_i^* A_g E_g^*;
 & \hbox{if } g > i. \hrow
\end{array}
\right.
$$
\end{lemma}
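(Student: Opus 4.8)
The plan is to establish part $(\rm{i})$ by splitting into the four cases on $g$ exactly as in the statement, and then to obtain part $(\rm{ii})$ by transposing (all $A_g$ and $E_i^*$ are symmetric, so transposition interchanges left and right multiplication and reverses the roles of the outer indices). For $(\rm{i})$, I would push $A_g$ through the sum $\sum_{j=0}^h E_i^* A_j E_i^*$ one factor at a time, using the block description of $A_g$ from Lemma \ref{lem-aj} and the nonvanishing pattern of triple products from Lemma \ref{lem-nonzero}.

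The key observation in each case is that $\sum_{j=0}^h E_i^* A_j E_i^*$ lives entirely in the $(i,i)$-block (all the $A_j$ with $j \le h < i$ contribute only the $(A_j)_{ii}$ block by Lemma \ref{lem-aj}), so the product is controlled by how $A_g$ interacts with that block. For $g \le h$: here $g \in [i-1]$, so $A_g E_i^* = E_i^* A_g E_i^*$ by Lemma \ref{lem-ter-basic}$(\rm{i})$, hence $A_g \sum_{j=0}^h E_i^* A_j E_i^* = \sum_{j=0}^h (E_i^* A_g E_i^*)(E_i^* A_j E_i^*)$, and since $g, j \le h < i$ Lemma \ref{lem-eiahei} converts each term to $E_i^* A_g A_j E_i^*$; now $A_g A_j = k_g A_j$ by Theorem \ref{thm-bose}$(\rm{ii})$ when $g < j$, while for $g \ge j$ one uses $A_g A_j = A_j A_g = k_j A_g$ — more cleanly, I would just write $A_g(\sum_{j=0}^h A_j) = k_g \sum_{j=0}^h A_j$ by Proposition \ref{prop-bose}$(\rm{ii})$ and sandwich with $E_i^*$, giving $k_g \sum_{j=0}^h E_i^* A_j E_i^*$. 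For $h < g < i$: again $A_g E_i^* = E_i^* A_g E_i^*$, so the left side is $\sum_{j=0}^h (E_i^* A_g E_i^*)(E_i^* A_j E_i^*) = \sum_{j=0}^h E_i^* A_g A_j E_i^*$ by Lemma \ref{lem-eiahei} (valid since $j \le h < i$), and $A_g A_j = k_j A_g$ by Theorem \ref{thm-bose}$(\rm{ii})$ (as $j < g$), yielding $(\sum_{j=0}^h k_j) E_i^* A_g E_i^*$. For $g = i$: now $A_i E_i^* = \sum_{r=0}^i E_r^* A_i E_i^*$ by Lemma \ref{lem-ter-basic}$(\rm{iv})$ — but more directly, $A_i E_i^*$ is already the desired right factor, so I compute $A_i \sum_{j=0}^h E_i^* A_j E_i^* = (A_i E_i^*)(\sum_{j=0}^h E_i^* A_j E_i^*)$ and reduce the block product; alternatively, observe $\sum_{j=0}^h E_i^* A_j E_i^* = E_i^*(\sum_{j=0}^h A_j)E_i^*$, multiply on the left by $A_i$, use $A_i E_i^* \cdot E_i^* = A_i E_i^*$ and that the $(i,i)$-block of $\sum_{j=0}^h A_j$ has constant row sum $\sum_{j=0}^h k_j$, giving $(\sum_{j=0}^h k_j) A_i E_i^*$. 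For $g > i$: by Lemma \ref{lem-ter-basic}$(\rm{i})$, $A_g E_i^* = E_g^* A_g E_i^*$, and one reduces the block product $(A_g)_{gi} (A_i)_{ii}$-style computations to $(\sum_{j=0}^h k_j) E_g^* A_g E_i^*$, using that the relevant rectangular block of $A_g$ is all-ones (Lemma \ref{lem-aj}) times the constant-row-sum block.

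I expect the main obstacle to be bookkeeping the $g = i$ case cleanly, since there $A_i$ does not simply commute past $E_i^*$ the way the other cases do — one must carry the full expansion $A_i E_i^* = (E_0^* + E_1^* + \cdots + E_i^*) A_i E_i^*$ and verify that the off-$(i,i)$ pieces of $A_i E_i^*$ interact correctly with $\sum_{j=0}^h E_i^* A_j E_i^*$ (they produce the stated $A_i E_i^*$ rather than $E_i^* A_i E_i^*$). Once the block-matrix picture is set up via Lemma \ref{lem-aj}, however, every case is a one-line matrix computation: the block $\sum_{j=0}^h (A_j)_{ii}$ is a $k_i \times k_i$ matrix with every row sum equal to $\sum_{j=0}^h k_j$, and multiplying an all-ones rectangular block into it (or it into an all-ones block) just scales by that row sum. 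Part $(\rm{ii})$ then follows immediately by taking transposes of $(\rm{i})$, since $\big(\sum_{j=0}^h E_i^* A_j E_i^*\big)^t = \sum_{j=0}^h E_i^* A_j E_i^*$ and $A_g^t = A_g$, swapping $E_g^* A_g E_i^* \leftrightarrow E_i^* A_g E_g^*$ and $A_i E_i^* \leftrightarrow E_i^* A_i$. (I note the "$k_k$" in the first line of $(\rm{ii})$ should read $k_g$.)
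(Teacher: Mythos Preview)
Your proposal is correct and, for the cases $g \le h$ and $h < g < i$, matches the paper's argument exactly (Lemma \ref{lem-ter-basic}(i) to pull $A_g$ inside, Lemma \ref{lem-eiahei} to collapse the product, then Proposition \ref{prop-bose}(ii) or Theorem \ref{thm-bose}(ii)). For the two remaining cases you diverge from the paper: where you descend to block matrices via Lemma \ref{lem-aj} and compute with row/column sums, the paper instead stays at the level of the $G_{ij}$'s, writing $E_r^* A_i E_i^* = k_i G_{ri}$ and $E_g^* A_g E_i^* = k_i G_{gi}$ and then invoking Lemma \ref{lem-aheij}(ii) to get $G_{ri} A_j = k_j G_{ri}$ term by term, with Lemma \ref{lem-eiahei} handling the $r = i$ piece separately. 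Both routes work; yours is a shade more concrete, while the paper's keeps everything phrased in triple products and the $G$-basis already set up for Theorem \ref{thm-ter}. One small point to tighten in your block argument for $g = i$: the ``all-ones rectangular block times constant-row-sum block'' reasoning covers the rows $r < i$ of $A_i E_i^*$, but the $(i,i)$-block $(A_i)_{ii}$ is not all-ones, so you still need Lemma \ref{lem-eiahei} (or equivalently the block identity $(A_i)_{ii}(A_j)_{ii} = k_j (A_i)_{ii}$ coming from $A_i A_j = k_j A_i$ and Lemma \ref{lem-aj}) for that row. Your transpose argument for (ii) is a clean shortcut; the paper simply says ``similar.''
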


\begin{proof}[Proof]
$(\rm{i})$ If $g \le h$, then for any $j \in [h]$, since $h < i$, by
Lemmas \ref{lem-ter-basic}$(\rm{i})$ and \ref{lem-eiahei},
$$
A_g E_i^* A_j E_i^* = (E_i^* A_g E_i^*) (E_i^* A_j E_i^*) = E_i^*
A_gA_j E_i^*.
$$
Thus, Proposition \ref{prop-bose}$(\rm{ii})$ implies that
$$
A_g \Big ( \sum_{j=0}^h E_i^* A_j E_i^* \Big )   = E_i^*  \Big (A_g
\sum_{j=0}^h A_j \Big ) E_i^*
  = E_i^*  \Big (k_g \sum_{j=0}^h A_j \Big ) E_i^*.
$$
So $(\rm{i})$ holds for $g \le h$.

If $h < g <i$, then similar to the proof of the case $g \le h$,
$$
A_g \Big ( \sum_{j=0}^h E_i^* A_j E_i^* \Big ) = E_i^*  \Big (A_g
\sum_{j=0}^h A_j \Big ) E_i^*
  = E_i^*  \Big (\sum_{j=0}^h k_j \Big ) A_g E_i^*.
$$
So $(\rm{i})$ holds for $h < g < i$.

If $g = i$, then for any $l \in [i-1]$ and any $j \in [h]$, since $h
< i$, we have that
$$
E_l^* A_i E_i^* A_j E_i^* = k_i G_{li} A_j = k_j k_i G_{li} = k_j
E_l^* A_i E_i^*
$$
by Lemmas \ref{lem-ter-basic-2}$(\rm{ii})$ and
\ref{lem-aheij}$(\rm{ii})$, and
$$
E_i^* A_i E_i^* A_j E_i^* = E_i^* A_i A_j E_i^* = k_j E_i^* A_i
E_i^*
$$
 by (\ref{eq-comm}). Thus, Lemma \ref{lem-ter-basic}$(\rm{iv})$ yields
that for any $j \in [h]$,
$$
A_i E_i^* A_j E_i^* = \sum_{l=0}^i E_l^* A_i E_i^* A_j E_i^* = k_j
\sum_{l=0}^i E_l^* A_i E_i^* = k_j A_i E_i^*.
$$
Therefore,
$$
A_i \Big ( \sum_{j=0}^h E_i^* A_j E_i^* \Big ) = \sum_{j=0}^h A_i
E_i^* A_j E_i^* = \Big( \sum_{j=0}^h k_j \Big) A_i E_i^*.
$$
So $(\rm{i})$ holds for $g = i$.

If $g > i$, then for any $j \in [h]$, by Lemmas
\ref{lem-ter-basic}$(\rm{i})$, \ref{lem-ter-basic-2}$(\rm{ii})$, and
\ref{lem-aheij}$(\rm{ii})$,
$$
A_g E_i^* A_j E_i^* = E_g^* A_g E_i^* A_j E_i^* = k_i G_{ki} A_j =
k_j k_i G_{ki} = k_j E_g^* A_g E_i^*.
$$
Thus,
$$
A_g\Big ( \sum_{j=0}^h E_i^* A_j E_i^* \Big ) = \sum_{j=0}^h A_g
E_i^* A_j E_i^* = \Big( \sum_{j=0}^h k_j \Big) E_g^* A_g E_i^*.
$$
So $(\rm{i})$ holds for $g > i$.

The proof of $(\rm{ii})$ is similar.
\end{proof}

Now we are ready to prove Theorem \ref{thm-ter-1}, which claims that
each nonzero element of the set
$$
\{ F_{ih} :\ i \in \{1, 2, ..., d\},\ h \in [i-1] \}
$$
where
$$
F_{ih} = \left \{
\begin{array}{ll}
\frac{\sum_{j=0}^h E_i^* A_j E_i^*} {\sum_{j=0}^h k_j} -
\frac{\sum_{j=0}^{h+1} E_i^* A_j E_i^*} {\sum_{j=0}^{h+1} k_j},
 & \hbox{if } h < i-1; \\
\frac{\sum_{j=0}^{i-1} E_i^* A_j E_i^*} {\sum_{j=0}^{i-1} k_j} -
G_{ii}, & \hbox{if } h = i-1; \hhrow
\end{array}
\right.
$$ is a central idempotent that spans a $1$-dimensional ideal of
$\t(x)$.

\begin{proof}[Proof of Theorem \ref{thm-ter-1}]
Let $i \in \{1, 2, ..., d\}$ and $ h \in [i-1]$. Clearly $F_{ih} \ne
0$ if $h < i-1$. From Theorem \ref{thm-bose} and Proposition
\ref{tripro} we see that $E_i^* A_i E_i^* = 0$ if and only if $n_i =
2$ if and only if $k_i = k_0 + k_1 + \cdots + k_{i-1}$. Hence,
$F_{i, i-1} = 0$ if and only if $n_i = 2$. Therefore, the set
$\{F_{ih} :\   i \in\{1,2,\dots, d\},\  h \in [i-1]\}$ has $d(d+1)/2
-b$ nonzero elements, where $b = |\{i\in\{1, 2, \dots, d\} :\  n_i =
2\}|$.

In the following we show that $F_{ih}^2 = F_{ih}$, for all $i \in
\{1, 2, \dots, d\}$ and $h \in [i-1]$. First if we assume that $h <
i-1$, then by Lemmas \ref{lem-ter-basic}$(\rm{i})$ and
\ref{lem-ak}$(\rm{ii})$,
$$
\Big ( \sum_{j=0}^h E_i^* A_j E_i^* \Big ) E_i^* A_{h+1} E_i^* =
\Big ( \sum_{j=0}^h E_i^* A_j E_i^* \Big ) A_{h+1} = \Big (
\sum_{j=0}^h k_j \Big ) E_i^* A_{h+1} E_i^*.
$$
Thus, Lemma \ref{lem-idemp} implies that
\begin{eqnarray*}
\Big ( \sum_{j=0}^h E_i^* A_j E_i^* \Big )
\Big ( \sum_{j=0}^{h+1} E_i^* A_j E_i^* \Big )
 & = & \left( \sum_{j=0}^h E_i^* A_j E_i^* \right)^2 +
\Big ( \sum_{j=0}^h E_i^* A_j E_i^* \Big ) E_i^* A_{h+1} E_i^* \\
& = & \Big ( \sum_{j=0}^h k_j \Big ) \Big ( \sum_{j=0}^{h+1} E_i^*
A_j E_i^* \Big ).
\end{eqnarray*}
Similarly,
$$
\Big ( \sum_{j=0}^{h+1} E_i^* A_j E_i^* \Big ) \Big ( \sum_{j=0}^h
E_i^* A_j E_i^* \Big ) = \Big ( \sum_{j=0}^h k_j \Big ) \Big (
\sum_{j=0}^{h+1} E_i^* A_j E_i^* \Big ).
$$
Therefore, by Lemma \ref{lem-idemp} we see that
$$
F_{ih}^2 = F_{ih}, \quad \hbox{for any } i\in\{1, 2, \dots, d\}
\hbox{ and }  h\in [i-2].
$$
Now assume that $h = i-1$. By Lemma \ref{lem-aheij}$(\rm{ii})$,
$$
\Big ( \sum_{j=0}^{i-1} E_i^* A_j E_i^* \Big ) G_{ii} =
\sum_{j=0}^{i-1} E_i^* A_j G_{ii} = \Big ( \sum_{j=0}^{i-1} k_j \Big
) G_{ii}.
$$
Similarly,
$$
G_{ii} \Big ( \sum_{j=0}^{i-1} E_i^* A_j E_i^* \Big ) = \Big (
\sum_{j=0}^{i-1} k_j \Big ) G_{ii}.
$$
Thus, Lemmas \ref{lem-idemp} and \ref{lem-eijekl} yield that
$$
F_{i, i-1}^2 = F_{i, i-1}, \quad \hbox{for any } 1 \le i \le d.
$$
Therefore, each nonzero element in the set $\{F_{ih} :\  i
\in\{1,2,\dots, d\},\  h \in [i-1]\}$ is an idempotent.

Now we prove that each nonzero element in the set $\{F_{ih} :\  i
\in\{1,2,\dots, d\},\  h \in [i-1]\}$ is a central idempotent that
spans a $1$-dimensional ideal of the Terwilliger algebra $\t(x)$.
Clearly this is a direct consequence of the following
\begin{equation}
\label{eq-akfih} A_g F_{ih} = F_{ih} A_g = \left \{
\begin{array}{ll}
k_g F_{ih}, & \hbox{if } g \le h; \\
- \Big ( \sum_{j=0}^h k_j \Big ) F_{ih}, & \hbox{if } g = h+1;
          \hrow \\
0, & \hbox{if } g > h + 1. \hrow
\end{array}
\right.
\end{equation}
From Lemmas \ref{lem-aheij} and \ref{lem-ak}, we see that
(\ref{eq-akfih}) holds if $g \le h$ or $g > h+1$. Now we show that
(\ref{eq-akfih}) is true for $g = h+1$. If $h < i-1$, then by Lemma
\ref{lem-ak}$(\rm{i})$,
\begin{eqnarray*}
A_{h+1} F_{ih} & = & E_i^* A_{h+1} E_i^* -
 \frac{ k_{h+1} \sum_{j=0}^{h+1} E_i^* A_j E_i^*}
      {\sum_{j=0}^{h+1} k_j} \\
 & = & \frac{ \Big( \sum_{j=0}^h k_j \Big)
       \sum_{j=0}^{h+1} E_i^* A_j E_i^* -
      \Big( \sum_{j=0}^{h+1} k_j \Big)
       \sum_{j=0}^h E_i^* A_j E_i^* }
      {\sum_{j=0}^{h+1} k_j}  \hhrow \\
 & = & - \Big( \sum_{j=0}^h k_j \Big) F_{ih}.
\end{eqnarray*}
Similarly,
$$
F_{ih} A_{h+1} = - \Big( \sum_{j=0}^h k_j \Big) F_{ih}.
$$
Thus, (\ref{eq-akfih}) holds if $h < i-1$ and $g = h+1$. If $h =
i-1$ and $g = h+1$, then by Lemmas \ref{lem-ak} and \ref{lem-aheij},
\begin{eqnarray*}
A_i F_{i, i-1} & = & A_i E_i^* - k_i \sum_{j=0}^{i-1} G_{ji}
     - \Big( k_i - \sum_{j=0}^{i-1} k_j \Big) G_{ii} \\
 & = & A_i E_i^* - \sum_{j=0}^{i-1} E_j^* A_i E_i^* -
     E_i^* J E_i^* +  \Big(\sum_{j=0}^{i-1} k_j \Big) G_{ii}.
\end{eqnarray*}
Since $A_i E_i^* = (E_0^* + E_1^* + \cdots + E_i^*) A_i E_i^*$ by
Lemma \ref{lem-ter-basic}$(\rm{iv})$ and $E_i^* J E_i^* = E_i^* (A_0
+ A_1 + \cdots + A_i) E_i^*$ by Lemma
\ref{lem-ter-basic}$(\rm{iii})$, we obtain that
$$
A_i E_i^* - \sum_{j=0}^{i-1} E_j^* A_i E_i^* - E_i^* J E_i^*
= - \sum_{j=0}^{i-1} E_i^* A_j E_i^*.
$$
Thus,
$$
A_i F_{i, i-1} = - \Big( \sum_{j=0}^h k_j \Big) F_{i, i-1}.
$$
Similarly,
$$
F_{i, i-1} A_i = - \Big( \sum_{j=0}^h k_j \Big) F_{i, i-1}.
$$
So (\ref{eq-akfih}) holds if $h = i-1$ and $g = h+1$. This proves
(\ref{eq-akfih}). Therefore, the theorem holds.
\end{proof}

\bigskip
\begin{center}{\textbf{Acknowledgement}}\end{center}
This research was supported in part by Vernon Wilson Endowment at
Eastern Kentucky University.

 \end{document}